\theoremstyle{plain}
\newtheorem{theorem}{Theorem}[section]
\newtheorem{remark}{Remark}[section]
\newtheorem{lemma}{Lemma}[section]
\title[Complete selfshrinkers ]
{Complete self-shrinkers with constant norm of the second fundamental form}
\author [Q. -M. Cheng, Z. Li and G. Wei]{Qing-Ming Cheng, Zhi Li  and Guoxin Wei}
\address{Qing-Ming Cheng \\  \newline \indent Department of Applied Mathematics, Faculty of Sciences,
\newline \indent Fukuoka University, Fukuoka  814-0180, Japan.  \newline \indent cheng@fukuoka-u.ac.jp}
\address{Zhi Li \\  School of Mathematical Sciences, South China Normal University,
\newline \indent 510631, Guangzhou,  China.  \newline \indent lizhihnsd@126.com}
\address{Guoxin Wei \\  School of Mathematical Sciences, South China Normal University,
\newline \indent 510631, Guangzhou,  China. \newline \indent  weiguoxin@tsinghua.org.cn}
\begin{document}
\maketitle

\begin{abstract}
In this paper, we classify $3$-dimensional complete self-shrinkers in Euclidean space $\mathbb R^{4}$  with constant squared norm of the
second fundamental form $S$ and constant $f_{4}$.

\end{abstract}

\footnotetext{2010 \textit{Mathematics Subject Classification}:
53C44, 53C40.}
\footnotetext{{\it Key words and phrases}: mean curvature flow,
 self-shrinker,  the generalized maximum principle, rigidity theorem.}

\footnotetext{The first author was partially  supported by JSPS Grant-in-Aid for Scientific Research (B):  No.16H03937.
The third  author was partly supported by grant No. 11771154 of NSFC and by GDUPS (2018).}

\section{introduction}
\vskip2mm
\noindent
\noindent
One of the most important problems in   mean curvature flow is to understand
the possible singularities that the flow goes through.  A key starting point
for singularity analysis is Huisken's monotonicity formula. The monotonicity
implies that the solution to the flow is asymptotically self-similar near a given type I singularity. Thus, it
 is modeled  by  self-shrinking solutions of the flow.
An  $n$-dimensional  submanifold  $X: M\rightarrow \mathbb{R}^{n+p}$  in the $(n+p)$-dimensional
Euclidean space $\mathbb{R}^{n+p}$  is called a self-shrinker if it satisfies
\begin{equation*}
\vec H+ X^{\perp}=0,
\end{equation*}
where  $X^{\perp}$  and $\vec H$ denote the normal part of the position vector $X$ and mean curvature vector of  this
submanifold.
It is known that self-shrinkers play an important role in the study on singularities  of the mean curvature flow because
they describe all possible  blow-ups at a given singularity.   \newline
For the classification of complete self-shrinkers with co-dimension $1$,  many nice works were done. Abresch and Langer \cite{AL}  classified
closed self-shrinkering curves in $\mathbb{R}^2$ and showed that the  round circle is the only embedded self-shrinker.
Huisken \cite{H2, H3},  Colding and Minicozzi \cite{CM}  classified $n$-dimensional  complete  embedded self-shrinkers
in $\mathbb{R}^{n+1}$ with  mean curvature $H\geq 0$ and  with polynomial volume growth.
According to the results  of Halldorsson \cite{H},
Ding and Xin \cite{DX1}, Cheng and Zhou \cite{CZ}, one knows
that for any positive integer $n$,
$\Gamma \times \mathbb R^{n-1}$ is a complete self-shrinker without polynomial volume growth in $\mathbb R^{n+1}$, where
$\Gamma$ is a complete self-shrinking curve of Halldorsson  \cite{H}. Hence, the  condition of polynomial volume growth in \cite{H3} and \cite{CM}
is essential. Furthermore,  for the study on the rigidity of complete self-shrinkers, many important works have been done
(cf. \cite{CL},  \cite{CO},  \cite{CP}, \cite{CW}, \cite{DX1}, \cite{DX2}, \cite{LW1}, \cite{LW2}  and so on). In particular, by estimating the first eigenvalue of the Dirichlet  eigenvalue problem,
Ding and Xin \cite{DX2}  studied $2$-dimensional complete self-shrinkers with polynomial volume growth. They  proved
that a $2$-dimensional complete self-shrinker  $X: M\rightarrow \mathbb{R}^{3}$  with polynomial volume growth
and with constant  squared norm $S$ of the second fundamental form
is isometric to one of
 $\mathbb{R}^{2}$,
 $S^{1} (1)\times \mathbb{R}$ and $S^{2}(\sqrt{2})$.

\noindent Recently,  Cheng and Ogata \cite{CO} have given a complete classification for 2-dimensional complete self-shrinkers with
 constant  squared norm $S$ of the second fundamental form, that is, they have proved the following:

\vskip3mm
\noindent
{\bf Theorem CO.}
{\it A $2$-dimensional complete self-shrinker  $X: M\rightarrow \mathbb{R}^{3}$  in $\mathbb{R}^{3}$ with constant  squared norm of the second fundamental form
is isometric to one of the following:
\begin{enumerate}
\item $\mathbb{R}^{2}$,
\item
 $S^1 (1)\times \mathbb{R}$,
\item  $S^{2}(\sqrt{2})$.
\end{enumerate}
}

\vskip2mm
\noindent
For the higher dimension $n$, it is not easy to classify self-shrinkers in Euclidean space with constant squared norm $S$. In this paper, under the assumption that $f_4$ constant, we give a complete classification for  $3$-dimensional complete  self-shrinker in $\mathbb R^{4}$ with constant squared norm $S$. In fact, we prove the following result.
\begin{theorem}\label{theorem 1}
 Let $X: M^{3}\to \mathbb{R}^{4}$ be a
$3$-dimensional complete  self-shrinker in $\mathbb R^{4}$.
If the squared norm $S$ of the second fundamental form and $f_{4}$ are constant, then $X: M^{3}\to \mathbb{R}^{4}$ is
isometric to one of
\begin{enumerate}
\item $\mathbb {R}^{3}$,
\item $S^{1}(1)\times \mathbb{R}^{2}$,
\item $S^{2}(\sqrt{2})\times \mathbb{R}^{1}$,
\item $S^{3}(\sqrt{3})$.
\end{enumerate}
In particular, $S$ must be $0$ and $1$; $f_4$ must be $0$, $\frac{1}{3}$, $\frac{1}{2}$ and $1$, where $S=\sum\limits_{i,j}h_{ij}^2$ and  $f_{4}=\sum\limits_{i,j,k,l}h_{ij}h_{jk}h_{kl}h_{li}$.
\end{theorem}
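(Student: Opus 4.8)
The plan is to exploit the drift Laplacian $\mathcal{L} = \Delta - \langle X, \nabla\,\cdot\,\rangle$ associated to the self-shrinker, which is the natural self-adjoint operator in the weighted $L^2$ space with Gaussian density. First I would record the basic second-order equations. Writing $h_{ij}$ for the components of the second fundamental form and $H=\sum_i h_{ii}$ for the mean curvature, and combining Simons' identity with the structural relations $\nabla_i H = h_{ij}\langle X, e_j\rangle$ and $\nabla_i\langle X,e_j\rangle = \delta_{ij} - H h_{ij}$ valid on a self-shrinker, the Hessian terms and the drift term cancel and one obtains the clean identity
\begin{equation*}
\mathcal{L} h_{ij} = (1-S)\,h_{ij},
\end{equation*}
and hence $\mathcal{L} H = (1-S)H$ and, after contracting with $h_{ij}$,
\begin{equation*}
\tfrac12\mathcal{L} S = |\nabla A|^2 + S(1-S).
\end{equation*}

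Second, I would feed in the hypothesis that $S$ is constant, so $\mathcal{L} S = 0$ and the last identity collapses to $|\nabla A|^2 = S(S-1)$. Since the left-hand side is nonnegative, either $S = 0$, in which case $M$ is totally geodesic and therefore the hyperplane $\mathbb{R}^3$, or $S \ge 1$ and $|\nabla A|^2 = S(S-1)$ is a nonnegative constant. The whole problem thus reduces to showing that $S > 1$ cannot occur for a complete self-shrinker with $f_4$ constant; equivalently, that the extra hypothesis forces $\nabla A \equiv 0$.

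Third — and this is the heart of the matter — I would bring in the constancy of $f_4 = \mathrm{tr}(A^4)$. Differentiating twice and using $\mathcal{L} h_{ij} = (1-S)h_{ij}$ together with the product rule gives an identity of the schematic form
\begin{equation*}
\tfrac12\mathcal{L} f_4 = 2(1-S) f_4 + Q(\nabla A, A),
\end{equation*}
where $Q$ is a quadratic form in the covariant derivative $\nabla A$ whose coefficients are polynomials in the principal curvatures $\lambda_1,\lambda_2,\lambda_3$. As $f_4$ is constant this reads $Q(\nabla A, A) = 2(S-1)f_4$. The task is then to combine this with the pointwise identity $|\nabla A|^2 = S(S-1)$ and with the equation $\mathcal{L} H = (1-S)H$ for the (a priori non-constant) mean curvature. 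Since $S$ is bounded, the principal curvatures are bounded, and one may invoke the generalized maximum principle of Omori–Yau type for the operator $\mathcal{L}$ on a complete self-shrinker: applied to $H$ (or to $f_3 = \mathrm{tr}(A^3)$), it produces sequences along which these functions approach their extrema while their $\mathcal{L}$-values become nonpositive and their gradients vanish in the limit. Feeding the Codazzi symmetries of $\nabla A$ and the Newton relation expressing $f_4$ as a polynomial in $H, S, f_3$ (available because $n=3$) into these limiting inequalities should pin down the principal curvatures.

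I expect the main obstacle to be precisely this third step: extracting $\nabla A \equiv 0$ from the constancy of only the two invariants $S$ and $f_4$. The difficulty is that $Q(\nabla A, A)$ is not manifestly sign-definite, so one cannot conclude immediately; a careful case analysis according to the number of distinct principal curvatures and their multiplicities seems unavoidable, together with an application of the Codazzi equations to control the off-diagonal components of $\nabla A$. Once $\nabla A \equiv 0$ is established, the endgame is standard: $M$ then has parallel second fundamental form, hence is an isoparametric hypersurface, and a complete such hypersurface in $\mathbb{R}^4$ splits as a product of a round sphere and a Euclidean factor. The self-shrinker equation fixes the radii to be $S^1(1)$, $S^2(\sqrt2)$, $S^3(\sqrt3)$, yielding the four models $\mathbb{R}^3$, $S^1(1)\times\mathbb{R}^2$, $S^2(\sqrt2)\times\mathbb{R}$, $S^3(\sqrt3)$; a direct computation of $S=\mathrm{tr}(A^2)$ and $f_4=\mathrm{tr}(A^4)$ on each gives $S\in\{0,1\}$ and $f_4\in\{0,\tfrac13,\tfrac12,1\}$, completing the classification.
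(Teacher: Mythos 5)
Your first two steps and your endgame are sound and agree with the paper's reduction: the identity $\tfrac12\mathcal{L}S=|\nabla A|^2+S(1-S)$ forces $S=0$ (the hyperplane) or $S\ge 1$, and the model values $S\in\{0,1\}$, $f_4\in\{0,\tfrac13,\tfrac12,1\}$ are computed correctly. But the proposal has a genuine gap exactly where you say you "expect the main obstacle": step three is a description of what must be proved, not a proof. You never produce the quadratic form $Q(\nabla A,A)$, never show how its indefiniteness is overcome, and the phrase "feeding the Codazzi symmetries and the Newton relation into these limiting inequalities should pin down the principal curvatures" is the entire content of the paper compressed into a wish. In fact the paper's proof of this step occupies essentially all of Section 3: it argues by contradiction that $\inf H^2>0$, taking a sequence $p_t$ with $H^2(p_t)\to 0$, extracting limits $\bar h_{ij}=\bar\lambda_i\delta_{ij}$, $\bar h_{ijk}$, $\bar h_{ijkl}$ (bounded because $S$ is constant), and then exploiting the vanishing of the first, second, \emph{third and fourth} covariant derivatives of $f_4$ (equations \eqref{2.1-17}--\eqref{2.1-20}) together with the Ricci identities and the constancy of $S$, in a case analysis over the configurations of $(\bar\lambda_1,\bar\lambda_2,\bar\lambda_3)$ (all equal, two equal, all distinct, with subcases on the vanishing pattern of the $\bar h_{ijk}$). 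Several of these subcases are eliminated only by numerically incompatible values of $S$ (e.g. $S=\tfrac52$ versus $S=\tfrac32$, or $S=\tfrac23<1$), which no soft maximum-principle argument applied just to $H$ or $f_3$ will detect; so the step would not go through as sketched.

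There is also a structural difference worth noting: your target intermediate statement is $\nabla A\equiv 0$ (then Lawson-type splitting of a hypersurface with parallel second fundamental form), whereas the paper's intermediate statement is the weaker claim $\inf H^2>0$ (Theorem \ref{theorem 2}), after which it invokes the Cheng--Peng classification (Lemma 2.4) of complete self-shrinkers with constant $S$ and $\inf H^2>0$. Both intermediate goals would suffice, but the paper's choice is what makes the contradiction argument tractable: assuming $\inf H^2=0$ gives the concrete normalization $\bar\lambda_1+\bar\lambda_2+\bar\lambda_3=0$ at the limit points, which is the starting point for the entire case analysis. Aiming directly at $\nabla A\equiv 0$, as you propose, gives you no such foothold, and you would still need an external classification result at the end; so the missing third step is not merely unfinished, it is set up in a way that makes it harder to finish than the paper's version.
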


\begin{remark} It is well-known that in the some senses,  the behavior of complete self-shrinkers $X: M^{n}\to \mathbb{R}^{n+1}$  is similar to one of compact
minimal hypersurfaces in the unit spheres. Since Chern conjecture on 3-dimensional compact minimal hypersurfaces in the unit sphere was solved affirmatively, 
one wants to  give a complete classification for  $3$-dimensional complete  self-shrinker in $\mathbb R^{4}$ with constant squared norm $S$. It is very difficult.
In fact, for 3-dimensional minimal hypersurfaces in the unit sphere, $f_4=\frac12 S^2$. Hence, in this case, if $S$ is constant, then $f_4$ is also constant.  But for 
self-shrinkers, we do not have this property.  Hence,  in the proof of our theorem,  it plays an important role  that $f_4$ is constant.
\end{remark}

\vskip5mm
\section {Preliminaries}
\vskip2mm

\noindent
Let $X: M\rightarrow\mathbb{R}^{n+1}$ be an
$n$-dimensional connected hypersurface of the $n+1$-dimensional Euclidean space
$\mathbb{R}^{n+1}$. We choose a local orthonormal frame field
$\{e_A\}_{A=1}^{n+1}$ in $\mathbb{R}^{n+1}$ with dual coframe field
$\{\omega_A\}_{A=1}^{n+1}$, such that, restricted to $M$,
$e_1,\cdots, e_n$ are tangent to $M^n$.

\noindent
From now on,  we use the following conventions on the ranges of indices:
$$
 1\leq i,j,k,l\leq n
$$
and $\sum_{i}$ means taking  summation from $1$ to $n$ for $i$.
Then we have
\begin{equation*}
dX=\sum_i\limits \omega_i e_i,
\end{equation*}
\begin{equation*}
de_i=\sum_j\limits \omega_{ij}e_j+\omega_{i n+1}e_{n+1},
\end{equation*}
\begin{equation*}
de_{n+1}=\omega_{n+1 i}e_i,
\end{equation*}
where $\omega_{ij}$ is the Levi-Civita connection of the hypersurface.

\noindent By  restricting  these forms to $M$,  we get
\begin{equation}\label{2.1-1}
\omega_{n+1}=0
\end{equation}
and the induced Riemannian metric of the hypersurface  is written as
$ds^2_M=\sum_i\limits\omega^2_i$.
Taking exterior derivatives of \eqref{2.1-1}, we obtain
\begin{equation*}
0=d\omega_{n+1}=\sum_i \omega_{n+1 i}\wedge\omega_i.
\end{equation*}
By Cartan's lemma, we know
\begin{equation}\label{2.1-2}
\omega_{in+1}=\sum_j h_{ij}\omega_j,\quad
h_{ij}=h_{ji}.
\end{equation}

$$
h=\sum_{i,j}h_{ij}\omega_i\otimes\omega_j
$$
and
$$
H= \sum_i\limits h_{ii}
$$
are called  the second fundamental form and the mean curvature  of $X: M\rightarrow\mathbb{R}^{n+1}$, respectively.
Let $S=\sum_{i,j}\limits (h_{ij})^2$ be  the squared norm
of the second fundamental form  of $X: M\rightarrow\mathbb{R}^{n+1}$.
The induced structure equations of $M$ are given by
\begin{equation*}
d\omega_{i}=\sum_j \omega_{ij}\wedge\omega_j, \quad  \omega_{ij}=-\omega_{ji},
\end{equation*}
\begin{equation*}
d\omega_{ij}=\sum_k \omega_{ik}\wedge\omega_{kj}-\frac12\sum_{k,l}
R_{ijkl} \omega_{k}\wedge\omega_{l},
\end{equation*}
where $R_{ijkl}$ denotes components of the curvature tensor of the hypersurface.
Hence,
the Gauss equations are given by
\begin{equation}\label{2.1-3}
R_{ijkl}=h_{ik}h_{jl}-h_{il}h_{jk}.
\end{equation}

\noindent
Defining the
covariant derivative of $h_{ij}$ by
\begin{equation}\label{2.1-4}
\sum_{k}h_{ijk}\omega_k=dh_{ij}+\sum_kh_{ik}\omega_{kj}
+\sum_k h_{kj}\omega_{ki},
\end{equation}
we obtain the Codazzi equations
\begin{equation}\label{2.1-5}
h_{ijk}=h_{ikj}.
\end{equation}
By taking exterior differentiation of \eqref{2.1-4}, and
defining
\begin{equation}\label{2.1-6}
\sum_lh_{ijkl}\omega_l=dh_{ijk}+\sum_lh_{ljk}\omega_{li}
+\sum_lh_{ilk}\omega_{lj}+\sum_l h_{ijl}\omega_{lk},
\end{equation}
we have the following Ricci identities:
\begin{equation}\label{2.1-7}
h_{ijkl}-h_{ijlk}=\sum_m
h_{mj}R_{mikl}+\sum_m h_{im}R_{mjkl}.
\end{equation}
Defining
\begin{equation}\label{2.1-8}
\begin{aligned}
\sum_mh_{ijklm}\omega_m&=dh_{ijkl}+\sum_mh_{mjkl}\omega_{mi}
+\sum_mh_{imkl}\omega_{mj}+\sum_mh_{ijml}\omega_{mk}\\
&\ \ +\sum_mh_{ijkm}\omega_{ml}
\end{aligned}
\end{equation}
and taking exterior differentiation of  \eqref{2.1-6}, we get
\begin{equation}\label{2.1-9}
\begin{aligned}
h_{ijkln}-h_{ijknl}&=\sum_{m} h_{mjk}R_{miln}
+ \sum_{m}h_{imk}R_{mjln}+ \sum_{m}h_{ijm}R_{mkln}.
\end{aligned}
\end{equation}
For a smooth function $f$, we define
\begin{equation}\label{2.1-10}
\sum_i f_{,i}\omega_i=df,
\end{equation}
\begin{equation}\label{2.1-11}
\sum_j f_{,ij}\omega_j=df_{,i}+\sum_j
f_{,j}\omega_{ji},
\end{equation}
\begin{equation}\label{2.1-12}
|\nabla f|^2=\sum_{i }(f_{,i})^2,\ \ \ \  \Delta f =\sum_i f_{,ii}.
\end{equation}
The $\mathcal{L}$-operator is defined by
\begin{equation*}
\mathcal{L}f=\Delta f-\langle X,\nabla f\rangle,
\end{equation*}
where $\Delta$ and $\nabla$ denote the Laplacian and the gradient
operator, respectively.
\vskip2mm
\noindent
Formulas in the following Lemma 2.1 can be found in  \cite{CW}.
\begin{lemma}
Let $X:M^n\rightarrow \mathbb{R}^{n+1}$ be an $n$-dimensional complete self-shrinker in $\mathbb R^{n+1}$. We have
\begin{equation}\label{2.1-13}
\mathcal{L}H=H(1-S).
\end{equation}
\begin{equation}\label{2.1-14}
\aligned
\frac{1}{2}\mathcal{L}
|X|^{2}=n-|X|^{2}.
\endaligned
\end{equation}
\begin{equation}\label{2.1-15}
\frac{1}{2}\mathcal{L}S
=\sum_{i,j,k}h_{ijk}^{2}+(1-S)S,
\end{equation}
where $S=\sum\limits_{i,j}h_{ij}^2$.
\end{lemma}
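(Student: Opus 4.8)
The plan is to verify the three identities by direct moving-frame computations, using only the self-shrinker equation $\vec H+X^{\perp}=0$ together with the Codazzi, Gauss, and Ricci identities \eqref{2.1-5}, \eqref{2.1-3}, \eqref{2.1-7} already recorded. First I would rewrite the self-shrinker condition in local form. Since $\vec H=He_{n+1}$ and $X^{\perp}=\langle X,e_{n+1}\rangle e_{n+1}$, the equation becomes $H=-\langle X,e_{n+1}\rangle$. Writing $x_i=\langle X,e_i\rangle$ for the tangential components and differentiating, using $\nabla_iX=e_i$ and the Weingarten relation $\nabla_ie_{n+1}=-\sum_j h_{ij}e_j$, I obtain the two first-order identities
$$ H_{,i}=\sum_j h_{ij}x_j, \qquad x_{j,i}=\delta_{ij}-Hh_{ij}. $$
These two formulas are the backbone of everything that follows.

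For \eqref{2.1-14} the computation is immediate: $\tfrac12(|X|^2)_{,i}=x_i$, so $\tfrac12\Delta|X|^2=\sum_i x_{i,i}=n-H^2$, while $\tfrac12\langle X,\nabla|X|^2\rangle=\sum_i x_i^2=|X|^2-H^2$ after splitting $|X|^2$ into its tangential and normal parts. Subtracting gives $\tfrac12\mathcal{L}|X|^2=n-|X|^2$. For \eqref{2.1-13} I would differentiate $H_{,i}=\sum_j h_{ij}x_j$ once more, substitute $x_{j,i}=\delta_{ij}-Hh_{ij}$, and trace; the Codazzi identity \eqref{2.1-5} turns $\sum_i h_{iik}$ into $H_{,k}$, which reassembles as $\langle X,\nabla H\rangle$ and cancels against the drift term in $\mathcal{L}$, leaving $\mathcal{L}H=H(1-S)$.

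The real work is \eqref{2.1-15}, and the natural route is first to prove the pointwise identity $\mathcal{L}h_{ij}=(1-S)h_{ij}$ and then contract with $h_{ij}$. To compute $\Delta h_{ij}=\sum_k h_{ijkk}$ I would use the full symmetry of $h_{ijk}$ from Codazzi to write $h_{ijkk}=h_{kijk}$, then commute the last two derivatives via the Ricci identity \eqref{2.1-7}, producing one Hessian term $H_{,ij}$ and two curvature contractions. Inserting the Gauss equation \eqref{2.1-3} expresses these in terms of $H(A^2)_{ij}$, $(A^3)_{ij}$ and $Sh_{ij}$ (with $(A^2)_{ij}=\sum_k h_{ik}h_{kj}$ and $(A^3)_{ij}=\sum_{k,l}h_{ik}h_{kl}h_{lj}$), giving the Simons-type formula $\Delta h_{ij}=H_{,ij}+H(A^2)_{ij}-Sh_{ij}$. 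The main obstacle, and the step needing the most care, is the bookkeeping of the cubic terms: I expect the two copies of $(A^3)_{ij}$ coming from the two curvature contractions to cancel, and the surviving $H(A^2)_{ij}$ to be killed by the same term hidden inside $H_{,ij}$ once I substitute the second-order expansion $H_{,ij}=\sum_k h_{ikj}x_k+h_{ij}-H(A^2)_{ij}$ obtained in the previous step. After this cancellation $\Delta h_{ij}$ collapses to $\sum_k h_{ijk}x_k+(1-S)h_{ij}$, so $\mathcal{L}h_{ij}=(1-S)h_{ij}$.

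Finally, since $\tfrac12\Delta S=\sum_{i,j,k}h_{ijk}^2+\sum_{i,j}h_{ij}\Delta h_{ij}$ and $\tfrac12\langle X,\nabla S\rangle=\sum_{i,j,k}h_{ij}h_{ijk}x_k$, subtracting and using $\mathcal{L}h_{ij}=(1-S)h_{ij}$ yields $\tfrac12\mathcal{L}S=\sum_{i,j,k}h_{ijk}^2+(1-S)S$, as claimed. The only genuinely delicate point throughout is the sign-consistent commutation of covariant derivatives in the Simons step; everything else is routine bookkeeping anchored to the two first-order identities above.
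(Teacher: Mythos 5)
Your computation is correct: the two first-order identities $H_{,i}=\sum_j h_{ij}x_j$ and $x_{j,i}=\delta_{ij}-Hh_{ij}$ are right with the paper's sign conventions, the cubic terms $(A^3)_{ij}$ do cancel exactly as you predicted in the Simons step, and the intermediate identity $\mathcal{L}h_{ij}=(1-S)h_{ij}$ then yields all three formulas. The paper itself gives no proof, deferring to the reference \cite{CW}, where these identities are established by essentially the same standard moving-frame argument you carried out, so your proposal matches the intended derivation.
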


\noindent
\begin{lemma}
Let $X:M^{n}\rightarrow \mathbb{R}^{n+1}$ be  a $n$-dimensional complete self-shrinker in $\mathbb R^{n+1}$. If $S$ is constant,  we have
\begin{equation}\label{2.1-16}
\aligned
\frac{1}{2}\mathcal{L}\sum_{i, j,k}(h_{ijk})^{2}
=&\sum_{i,j,k,l}(h_{ijkl})^{2}+(2-S)\sum_{i,j,k}(h_{ijk})^{2}+6\sum_{i,j,k,l,p}h_{ijk}h_{il}h_{jp}h_{klp}\\
&-3\sum_{i,j,k,l,p}h_{ijk}h_{ijl}h_{kp}h_{lp},
\endaligned
\end{equation}
\end{lemma}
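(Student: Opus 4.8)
The plan is to establish \eqref{2.1-16} by a Bochner-type argument resting on a Simons-type formula for the third covariant derivatives $h_{ijk}$. Writing $T=\sum_{i,j,k}h_{ijk}^2$, the product rule together with $\mathcal{L}=\Delta-\langle X,\nabla\cdot\rangle$ gives at once
\[
\frac12\mathcal{L}T=\sum_{i,j,k}h_{ijk}\,\mathcal{L}h_{ijk}+\sum_{i,j,k,l}h_{ijkl}^2 ,
\]
which already supplies the term $\sum_{i,j,k,l}h_{ijkl}^2$ of \eqref{2.1-16}. Thus everything reduces to computing $\mathcal{L}h_{ijk}$ and contracting it against $h_{ijk}$.

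First I would record the self-shrinker Simons identity $\mathcal{L}h_{ij}=(1-S)h_{ij}$. This follows by combining the Euclidean Simons formula $\Delta h_{ij}=H_{,ij}+H\sum_m h_{im}h_{mj}-Sh_{ij}$ with the self-shrinker relations $H_{,i}=\sum_k h_{ik}\langle X,e_k\rangle$ and $\langle X,e_k\rangle_{,j}=\delta_{kj}-Hh_{kj}$, and it is consistent with \eqref{2.1-13} and \eqref{2.1-15}. Since $S$ is constant, I would then apply $\nabla_k$ to this identity and commute $\nabla_k$ past $\mathcal{L}$: differentiating the drift term with $\langle X,e_m\rangle_{,k}=\delta_{mk}-Hh_{mk}$ and commuting the last two covariant derivatives by the Ricci identity \eqref{2.1-7} gives
\[
\mathcal{L}h_{ijk}=(2-S)h_{ijk}-H\sum_{m}h_{ijm}h_{mk}+\sum_{m}\langle X,e_m\rangle[\nabla_k,\nabla_m]h_{ij}-[\nabla_k,\Delta]h_{ij}.
\]

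Next I would contract this with $h_{ijk}$ and sum. The first term yields $(2-S)\sum_{i,j,k}h_{ijk}^2$. All explicit dependence on the position vector is confined to $\sum_{i,j,k}h_{ijk}\,\mathcal{L}h_{ijk}=\sum_{i,j,k}h_{ijk}\Delta h_{ijk}-\frac12\langle X,\nabla T\rangle$; since $S$ is constant, \eqref{2.1-15} forces $T=(S-1)S$ to be constant, so $\langle X,\nabla T\rangle=0$, and in particular the contracted position term $\sum_{i,j,k}h_{ijk}\sum_m\langle X,e_m\rangle[\nabla_k,\nabla_m]h_{ij}$ vanishes (as one also checks directly from the Gauss equation \eqref{2.1-3}, the Codazzi symmetry \eqref{2.1-5} of $h_{ijk}$, and $\nabla S=0$). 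It then remains to contract the curvature commutator $[\nabla_k,\Delta]h_{ij}=\sum_l(h_{ijllk}-h_{ijkll})$. I would evaluate it by applying the Ricci identities \eqref{2.1-7} and \eqref{2.1-9}, replacing each curvature component by a quadratic in $h_{ij}$ through \eqref{2.1-3}, and treating the resulting $\nabla R$ terms again by \eqref{2.1-3}. After contraction with $h_{ijk}$ the mean-curvature (trace) part cancels the $-H\sum_m h_{ijm}h_{mk}$ contribution, and the remaining quartic contractions assemble into $6\sum h_{ijk}h_{il}h_{jp}h_{klp}-3\sum h_{ijk}h_{ijl}h_{kp}h_{lp}$, completing \eqref{2.1-16}.

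The hard part is this final commutator computation. Applying the fourth- and fifth-order Ricci identities to $[\nabla_k,\Delta]h_{ij}$, substituting the Gauss equation for both $R_{ijkl}$ and its covariant derivative, and then symmetrizing so that (i) all terms containing the mean curvature $H$ cancel and (ii) the many cubic-times-derivative contractions collapse to exactly the two tensors with coefficients $6$ and $-3$, is where the bookkeeping is delicate. The key simplification throughout is that the first three indices of $h_{ijk},h_{ijkl},h_{ijllk},\dots$ are symmetric by the Codazzi equation \eqref{2.1-5}, which is what lets the superficially different contractions be identified and combined.
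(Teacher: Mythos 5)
Your reduction is set up correctly: the product-rule identity, the Simons-type identity $\mathcal{L}h_{ij}=(1-S)h_{ij}$, and your displayed formula for $\mathcal{L}h_{ijk}$ are all right, and this is indeed the ``direct calculation with the Ricci identities'' that the paper's one-line proof alludes to. The genuine error is your claim that the contracted position term vanishes. It does not. Substituting the Ricci identity \eqref{2.1-7} and the Gauss equation \eqref{2.1-3} into $[\nabla_k,\nabla_m]h_{ij}=h_{ijmk}-h_{ijkm}$ and using the total symmetry of $h_{ijk}$, one finds
\begin{equation*}
\sum_{i,j,k}h_{ijk}\sum_m\langle X,e_m\rangle\bigl(h_{ijmk}-h_{ijkm}\bigr)
=-\frac{2}{3}\sum_i H_{,i}\,\nabla_i f_3 ,
\end{equation*}
because the contractions of type $\sum_{i,k}h_{ijk}h_{ik}=\tfrac12\nabla_j S$ do vanish, but those of type $\sum_{j,k,p}h_{ijk}h_{jp}h_{pk}=\tfrac13\nabla_i f_3$ do not, and $\sum_m h_{im}\langle X,e_m\rangle=H_{,i}$. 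Under the sole hypothesis of this lemma ($S$ constant), neither $H$ nor $f_3$ is constant, so this quantity is not zero. Your justification is also a non sequitur: $\langle X,\nabla T\rangle=0$ is the drift term of $\mathcal{L}T$ and is a different object from this commutator term; and the heuristic ``no explicit $X$-dependence can survive'' fails precisely because on a self-shrinker the other commutator, $[\nabla_k,\Delta]h_{ij}$, is itself $X$-dependent through its $\nabla H$ pieces and the relation $H_{,i}=\sum_k h_{ik}\langle X,e_k\rangle$.

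That is where the real cancellation lives, and your outline suppresses it. Expanding $[\nabla_k,\Delta]h_{ij}$ with \eqref{2.1-7} and \eqref{2.1-9}, differentiating the Gauss equation, and using the Codazzi trace $\sum_l h_{ill}=H_{,i}$, the contraction with $h_{ijk}$ produces exactly $+\frac{2}{3}\sum_i H_{,i}\nabla_i f_3$, which cancels the position term above. So of the cancellations you assert, only one is as stated: the undifferentiated-$H$ terms (coming from $\sum_l R_{mlkl}=Hh_{mk}-\sum_l h_{ml}h_{lk}$) cancel the $-H\sum_m h_{ijm}h_{mk}$ contribution, but the $\nabla H$ terms do not cancel among themselves --- they cancel the position term you had already discarded. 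Carried out exactly as written, your bookkeeping would therefore end with \eqref{2.1-16} plus the spurious extra term $\frac{2}{3}\sum_i H_{,i}\nabla_i f_3$. Once the position term is kept and paired against the $\nabla H$ terms, the remaining quartic contractions do collapse as you describe (coefficients $4+2=6$ and $-1-2=-3$, plus one leftover term proportional to $\sum_j(\nabla_j S)^2$, which vanishes), and \eqref{2.1-16} follows.
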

\begin{proof} By making use of the Ricci identities  \eqref{2.1-7},  \eqref{2.1-9} and a direct calculation, we
can  obtain \eqref{2.1-16}.
\end{proof}

\noindent
We define two functions $f_3$  and $f_4$ as follows:
$$
f_3=\sum_{i,j,k}h_{ij}h_{jk}h_{ki},
$$
$$
 \ f_4=\sum_{i,j,k,l}h_{ij}h_{jk}h_{kl}h_{li},
$$
then we get the following result.
\begin{lemma}\label{lemma 2.3}
Let $X:M^{3}\rightarrow \mathbb{R}^{4}$ be  a $n$-dimensional complete hypersurface in $\mathbb R^{4}$. Then  we can choose a local field of orthonormal frames on $M^3$ such that, at the point,
$h_{ij}=\lambda_i\delta_{ij}$,

$$f_{3}=\frac{H}{2}(3S-H^{2})+3\lambda_{1}\lambda_{2}\lambda_{3},$$
$$f_{4}=\frac{4}{3}Hf_{3}-H^{2}S+\frac{1}{6}H^{4}+\frac{1}{2}S^{2},$$

$$
\nabla_{l}f_{3}=3\sum_{i,j,k}h_{ijl}h_{jk}h_{ki}, \ \ \text{for } \ l=1, 2, 3,
$$
$$
\nabla_{p}\nabla_{l}f_{3}=3\sum_{i,j,k}h_{ijlp}h_{jk}h_{ki}+6\sum_{i,j,k}h_{ijl}h_{jkp}h_{ki}, \ \ \text{for } \ l,p=1, 2, 3.
$$
and
$$
\nabla_{m}f_{4}=4\sum_{i,j,k,l}h_{ijm}h_{jk}h_{kl}h_{li}, \ \ \text{for } \ m=1, 2, 3,
$$

\begin{equation*}
\begin{aligned}
\nabla_{p}\nabla_{m}f_{4}=&4\sum_{i,j,k,l}h_{ijmp}h_{jk}h_{kl}h_{li}\\
                          &+4\sum_{i,j,k,l}h_{ijm}(2h_{jkp}h_{kl}h_{li}+h_{jk}h_{klp}h_{li}), \  \text{for } \ m,p=1, 2, 3.
\end{aligned}
\end{equation*}

\begin{equation}\label{2.1-17}
\nabla_{k}f_{4}=\frac{4}{3}f_{3} H_{,k}+\frac{4}{3}H\nabla_{k}f_{3}-2SHH_{,k}+\frac{2}{3}H^{3}H_{,k},
\end{equation}

\begin{equation}\label{2.1-18}
\begin{aligned}
\nabla_{l}\nabla_{k}f_{4} =&\frac{4}{3}f_{3}H_{,kl}-2SHH_{,kl}
   +\frac{2}{3}H^{3}H_{,kl}+\frac{4}{3}H\nabla_{l}\nabla_{k}f_{3}+\frac{4}{3}\nabla_{l}f_{3} H_{,k} \\
  &+\frac{4}{3}H_{,l}\nabla_{k}f_{3}-2SH_{,k}H_{,l}+2H^{2}H_{,k}H_{,l},
\end{aligned}
\end{equation}

\begin{equation}\label{2.1-19}
\begin{aligned}
&\nabla_{m}\nabla_{l}\nabla_{k}f_{4}\\
=&\frac{4}{3}f_{3}H_{,klm}-2SHH_{,klm}
  +\frac{2}{3}H^{3}H_{,klm}+\frac{4}{3}H\nabla_{m}\nabla_{l}\nabla_{k}f_{3}+\frac{4}{3}\nabla_{m}f_{3}H_{,kl}\\
 &-2SH_{,m}H_{,kl}+2H^{2}H_{,m}H_{,kl}+\frac{4}{3}H_{,m}\nabla_{l}\nabla_{k}f_{3}+\frac{4}{3}\nabla_{l}f_{3} H_{,km} \\
 &+\frac{4}{3}\nabla_{m}\nabla_{l}f_{3} H_{,k}+\frac{4}{3}H_{,l}\nabla_{m}\nabla_{k}f_{3}+\frac{4}{3}H_{,lm}\nabla_{k}f_{3}-2SH_{,km}H_{,l} \\
 &-2SH_{,k}H_{,lm}+2H^{2}H_{,km}H_{,l}+2H^{2}H_{,k}H_{,lm}+4HH_{,k}H_{,l}H_{,m},
\end{aligned}
\end{equation}

\begin{equation}\label{2.1-20}
\begin{aligned}
&\nabla_{n}\nabla_{m}\nabla_{l}\nabla_{k}f_{4}\\
 =&\frac{4}{3}f_{3}H_{,klmn}+\frac{4}{3}\nabla_{n}f_{3}H_{,klm}-2SHH_{,klmn}-2SH_{,n}H_{,klm}+\frac{2}{3}H^{3}H_{,klmn} \\
  &+2H^{2}H_{,n}H_{,klm}+\frac{4}{3}H\nabla_{n}\nabla_{m}\nabla_{l}\nabla_{k}f_{3}+\frac{4}{3}H_{,n}\nabla_{m}\nabla_{l}\nabla_{k}f_{3}
   +\frac{4}{3}\nabla_{m}f_{3}H_{,kln}\\
  &+\frac{4}{3}\nabla_{n}\nabla_{m}f_{3}H_{,kl}-2SH_{,m}H_{,kln}-2SH_{,mn}H_{,kl}+2H^{2}H_{,m}H_{,kln} \\
  &+2H^{2}H_{,mn}H_{,kl}+4HH_{,n}H_{,m}H_{,kl}+\frac{4}{3}H_{,m}\nabla_{n}\nabla_{l}\nabla_{k}f_{3}+\frac{4}{3}H_{,mn}\nabla_{l}\nabla_{k}f_{3}\\
  &+\frac{4}{3}\nabla_{l}f_{3} H_{,kmn}+\frac{4}{3}\nabla_{n}\nabla_{l}f_{3} H_{,km}+\frac{4}{3}\nabla_{n}\nabla_{m}\nabla_{l}f_{3} H_{,k}+\frac{4}{3}\nabla_{m}\nabla_{l}f_{3} H_{,kn}\\
  &+\frac{4}{3}H_{,l}\nabla_{n}\nabla_{m}\nabla_{k}f_{3}+\frac{4}{3}H_{,ln}\nabla_{m}\nabla_{k}f_{3}+\frac{4}{3}H_{,lmn}\nabla_{k}f_{3}
  +\frac{4}{3}H_{,lm}\nabla_{n}\nabla_{k}f_{3} \\
  &-2SH_{,kmn}H_{,l}-2SH_{,km}H_{,ln}-2SH_{,k}H_{,lmn}-2SH_{,kn}H_{,lm}+2H^{2}H_{,km}H_{,ln} \\
  &+2H^{2}H_{,kmn}H_{,l}+4HH_{,km}H_{,l}H_{,n}+2H^{2}H_{,k}H_{,lmn}+2H^{2}H_{,kn}H_{,lm} \\
  &+4HH_{,n}H_{,k}H_{,lm}+4HH_{,k}H_{,l}H_{,mn}+4HH_{,k}H_{,ln}H_{,m}+4HH_{,kn}H_{,l}H_{,m} \\
  &+4H_{n}H_{,k}H_{,l}H_{,m},
\end{aligned}
\end{equation}
for k, l, m, n=1, 2, 3.
\end{lemma}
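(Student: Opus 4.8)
The plan is to split Lemma~\ref{lemma 2.3} into two independent parts: the two pointwise algebraic identities for $f_3$ and $f_4$, and the chain of covariant-derivative formulas obtained from them. For the algebraic identities I would fix a point and choose the orthonormal frame diagonalizing the second fundamental form, so that $h_{ij}=\lambda_i\delta_{ij}$. Then $f_3=\sum_i\lambda_i^3$ and $f_4=\sum_i\lambda_i^4$ are exactly the power sums $p_3$ and $p_4$ of the principal curvatures. Writing the elementary symmetric functions as $e_1=H$, $e_2=\frac{1}{2}(H^2-S)$ (from $S=H^2-2e_2$), $e_3=\lambda_1\lambda_2\lambda_3$, and $e_4=0$ since $n=3$, Newton's identities $p_3=e_1p_2-e_2p_1+3e_3$ and $p_4=e_1p_3-e_2p_2+e_3p_1$ give at once $f_3=\frac{H}{2}(3S-H^2)+3\lambda_1\lambda_2\lambda_3$; substituting the resulting expression for $3e_3$ back into $p_4$ then yields $f_4=\frac{4}{3}Hf_3-H^2S+\frac{1}{6}H^4+\frac{1}{2}S^2$. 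As identities among the invariants $H$, $S$, $f_3$, $f_4$, these hold at every point of $M^3$.

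Next I would derive the formulas for $\nabla f_3$, $\nabla f_4$ and their second derivatives directly from the definitions $f_3=\sum h_{ij}h_{jk}h_{ki}$ and $f_4=\sum h_{ij}h_{jk}h_{kl}h_{li}$, viewed as globally defined smooth functions. The only structural input is that $h_{ijk}$ is totally symmetric in its three indices, which follows from $h_{ij}=h_{ji}$ together with the Codazzi equation \eqref{2.1-5}. Applying the Leibniz rule to $f_3$ (resp.\ $f_4$) produces three (resp.\ four) terms that all coincide after relabeling the summation indices, which accounts for the overall factor $3$ (resp.\ $4$). Differentiating once more and again collecting terms that agree under index permutation produces the coefficient $6$ in $\nabla_p\nabla_l f_3$ and the grouping $2h_{jkp}h_{kl}h_{li}+h_{jk}h_{klp}h_{li}$ in $\nabla_p\nabla_m f_4$.

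Finally, I would obtain \eqref{2.1-17}--\eqref{2.1-20} by applying the iterated operators $\nabla_k$, $\nabla_l\nabla_k$, $\nabla_m\nabla_l\nabla_k$ and $\nabla_n\nabla_m\nabla_l\nabla_k$ to both sides of the identity $f_4=\frac{4}{3}Hf_3-H^2S+\frac{1}{6}H^4+\frac{1}{2}S^2$ and expanding by the Leibniz rule. Since $S$ is constant, every factor containing $\nabla S$ drops out, and because the derivatives are applied successively in a fixed order rather than reordered, no Ricci-identity correction enters and equality is preserved at each stage; one simply treats $H$, $f_3$ and their successive covariant derivatives as the basic building blocks. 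The computations giving \eqref{2.1-17} and \eqref{2.1-18} are short, but \eqref{2.1-19} and especially \eqref{2.1-20} generate a large number of Leibniz terms, and I expect the only real obstacle to be the organizational bookkeeping at this last step: correctly enumerating the products of the types $H_{,k}H_{,l}H_{,m}H_{,n}$, $H\,\nabla_n\nabla_m\nabla_l f_3$, and $S\,H_{,k}H_{,l}$, and assembling them with the correct multiplicities. No geometric input beyond the constancy of $S$ and the total symmetry of $h_{ijk}$ is required.
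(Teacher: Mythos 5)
Your proposal is correct. The paper in fact gives no proof of Lemma \ref{lemma 2.3} at all---it is stated as a direct-computation lemma---and your argument (diagonalize at a point, obtain the two algebraic relations from Newton's identities for the power sums $p_3$, $p_4$ with $e_4=0$, then differentiate the resulting identity of invariant functions successively by the Leibniz rule, so that no Ricci-identity corrections arise) is exactly the computation the authors leave implicit. One point in your favor worth recording: you correctly noted that the four formulas \eqref{2.1-17}--\eqref{2.1-20} require the hypothesis that $S$ is constant (so that all $\nabla S$ terms vanish), a hypothesis the lemma's statement omits but which is present in the paper's only application of it (Theorem \ref{theorem 2}).
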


\noindent
We need the following lemma due to Cheng and Peng \cite{CP} to prove our result.

\noindent
\begin{lemma}
For an $n$-dimensional complete self-shrinker
 $X:M^{n}\rightarrow \mathbb{R}^{n+1} $  with   $\inf H^{2}>0$,
if  the squared norm $S$ of the second fundamental form is constant, then $M^{n}$ is  isometric to either
$S^{n}(\sqrt{n})$ or $S^{m}(\sqrt m)\times\mathbb{R}^{n-m}$ in $\mathbb{R}^{n+1}$, $1\leq m\leq n-1$.
\end{lemma}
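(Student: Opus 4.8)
The plan is to reduce the whole statement to two facts: that the hypotheses force $S=1$, and that then the second fundamental form is parallel. First I would record some boundedness. Since $S$ is constant, the Cauchy--Schwarz inequality $H^{2}=\big(\sum_i\lambda_i\big)^2\le n\sum_i\lambda_i^2=nS$ shows that $H$ is bounded, while $\inf H^{2}>0$ shows that $H$ never vanishes; as $M$ is connected we may orient so that $H>0$, giving $0<\sqrt{\inf H^{2}}\le H\le\sqrt{nS}$. Moreover, because $S$ is constant the Gauss equation \eqref{2.1-3} bounds the sectional curvature of $M$, which is precisely the geometric input that makes the generalized (Omori--Yau type) maximum principle for the operator $\mathcal L$ available on this complete self-shrinker.

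The heart of the argument is to apply this generalized maximum principle to the bounded function $H$, using the identity $\mathcal LH=H(1-S)$ from \eqref{2.1-13}. Applied to $H$, the principle produces a sequence $\{p_k\}$ with $H(p_k)\to\sup H$ and $\limsup_k\mathcal LH(p_k)\le 0$; since $\mathcal LH=(1-S)H$ with $S$ constant, this gives $(1-S)\sup H\le 0$, hence $S\ge 1$ because $\sup H>0$. Applied instead to the bounded function $-H$, it produces a sequence $\{q_k\}$ along which $H(q_k)\to\inf H$ and $\limsup_k\mathcal L(-H)(q_k)=(S-1)\inf H\le 0$; since $\inf H>0$ this forces $S\le 1$. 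Combining the two inequalities yields $S=1$.

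Once $S=1$, the Simons-type identity \eqref{2.1-15} together with $\mathcal LS=0$ gives $\sum_{i,j,k}h_{ijk}^{2}=S(S-1)=0$, so the second fundamental form is parallel, $\nabla h\equiv 0$. A hypersurface of $\mathbb R^{n+1}$ with parallel second fundamental form has constant principal curvatures and, via the de Rham decomposition and the classification of such hypersurfaces, is an open part of a hyperplane, a round sphere, or a spherical cylinder $S^{m}(r)\times\mathbb R^{n-m}$. Imposing the self-shrinker equation $\vec H+X^{\perp}=0$ on the spherical factor pins the radius to $r=\sqrt m$ (resp. $r=\sqrt n$), while $\inf H^{2}>0$ rules out the totally geodesic case; this leaves exactly $S^{n}(\sqrt n)$ and $S^{m}(\sqrt m)\times\mathbb R^{n-m}$ with $1\le m\le n-1$, each of which indeed has $S=1$, in agreement with the above.

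The main obstacle, and the point where one genuinely invokes the work of Cheng and Peng, is the validity of the generalized maximum principle for $\mathcal L$ on a complete self-shrinker that is \emph{not} assumed to have polynomial volume growth or to be properly immersed. Granting that principle the rest is elementary; without it one cannot pass from the pointwise identity $\mathcal LH=(1-S)H$ to the global conclusion $S=1$, and the short argument above would instead have to be replaced by an integral (divergence) argument valid only under a volume-growth hypothesis.
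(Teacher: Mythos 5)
This lemma is not proved in the paper at all: it is quoted from Cheng and Peng \cite{CP}, and your proposal is a correct reconstruction of exactly that cited argument --- constant $S$ plus the Gauss equations \eqref{2.1-3} give the curvature bound under which the generalized maximum principle for $\mathcal{L}$ holds, applying it to $H$ and $-H$ together with $\mathcal{L}H=H(1-S)$ from \eqref{2.1-13} and $\inf H^{2}>0$ forces $S=1$, the identity \eqref{2.1-15} then gives $\nabla h\equiv 0$, and Lawson's rigidity theorem \cite{L} plus the self-shrinker equation pins down the list. You also locate the real content correctly: the generalized maximum principle for $\mathcal{L}$ without properness or volume-growth assumptions (established in \cite{CP} under Ricci curvature bounded from below, which constant $S$ supplies) is the one ingredient your sketch takes on faith, and the remaining steps are elementary and correct.
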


 \vskip10mm
\section{Proof of the main result}

\vskip2mm
\noindent
From \eqref{2.1-15} of the Lemma 2.1, one has either $S=0$, or $S\geq1$. If $S=0$, then we know that $X: M^{3}\to \mathbb{R}^{4}$ is $\mathbb{R}^{3}$. Next, we assume that $S\geq 1$. From the Lemma 2.3, it is sufficient to prove that $\inf H^{2}>0$. We now prove the following theorem.

\begin{theorem}\label{theorem 2}
For a $3$-dimensional complete self-shrinker $X:M^{3}\rightarrow \mathbb{R}^{4}$ with non-zero constant squared norm $S$ of the second fundamental form and constant $f_{4}$, then  $\inf H^{2}>0$,
where $S=\sum_{i,j}h_{ij}^2$ and $f_{4}=\sum_{i,j,k,l}h_{ij}h_{jk}h_{kl}h_{li}$.
\end{theorem}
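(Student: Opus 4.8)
The plan is to argue by contradiction: assume $\inf H^{2}=0$ and derive a contradiction, so that in fact $\inf H^{2}>0$ and Lemma~2.4 applies. Throughout I use that $S\ge 1$ is a nonzero constant, so that by \eqref{2.1-15} the quantity $\sum_{i,j,k}h_{ijk}^{2}=S(S-1)$ is constant, that $H^{2}\le 3S$ (hence $H$ and all principal curvatures $\lambda_i$ are bounded, $|\lambda_i|\le\sqrt S$), and that for a self-shrinker $\mathcal L h_{ij}=(1-S)h_{ij}$, which recovers \eqref{2.1-13} upon taking the trace.

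The decisive first move is algebraic. Substituting $f_{3}=\tfrac{H}{2}(3S-H^{2})+3\lambda_{1}\lambda_{2}\lambda_{3}$ into the formula for $f_{4}$ in Lemma~\ref{lemma 2.3} gives $H\lambda_{1}\lambda_{2}\lambda_{3}=\tfrac14\big(f_{4}-\tfrac12 S^{2}\big)-\tfrac14 SH^{2}+\tfrac18 H^{4}$. Since $\inf H^{2}=0$ there is a sequence along which $H\to 0$; as the $\lambda_i$ stay bounded the left-hand side tends to $0$, forcing $f_{4}=\tfrac12 S^{2}$. Feeding this back shows that $t=\tfrac{H}{2}$ is always a root of the characteristic polynomial $t^{3}-Ht^{2}+\tfrac12(H^{2}-S)t-\lambda_{1}\lambda_{2}\lambda_{3}$; that is, one principal curvature equals $H/2$ identically, the three being $\tfrac{H}{2}$ and $\tfrac{H}{4}\pm\tfrac12\sqrt{2S-\tfrac34 H^{2}}$, which converge to $0$ and $\pm\sqrt{S/2}$ as $H\to 0$.

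With this structure in hand the case $S=1$ is immediate: then $\sum_{i,j,k}h_{ijk}^{2}=S(S-1)=0$, so the second fundamental form is parallel, $\nabla A\equiv 0$. By the classification of hypersurfaces of $\mathbb R^{4}$ with parallel second fundamental form, the nonzero principal curvatures are mutually equal and of a single sign, which is incompatible with the limiting configuration $(\sqrt{1/2},-\sqrt{1/2},0)$ found above; this contradiction gives $\inf H^{2}>0$ when $S=1$.

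The remaining, and hardest, case is $S>1$, where $\nabla A\not\equiv 0$ and the parallel argument fails. Here I would invoke the generalized maximum principle for $\mathcal L$ on complete self-shrinkers (legitimate since $S$ is bounded) to produce a sequence realizing $\inf H^{2}=0$ along which $|\nabla H^{2}|\to 0$ and $\liminf\mathcal L H^{2}\ge 0$, and combine it with the full set of available identities: $\mathcal L H^{2}=2(1-S)H^{2}+2|\nabla H|^{2}$; the expressions for $\mathcal L f_{3}$ and $\mathcal L f_{4}=0$ coming from $\mathcal L h_{ij}=(1-S)h_{ij}$; the relation for $\sum_{i,j,k,l}h_{ijkl}^{2}$ obtained from $\mathcal L\!\sum h_{ijk}^{2}=0$ via \eqref{2.1-16}; and the vanishing of all covariant derivatives of $f_{4}$ recorded in \eqref{2.1-17}--\eqref{2.1-20}. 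In a frame diagonalizing $h_{ij}$ these yield relations among the moments $\sum_{i}\lambda_{i}^{p}\big(\sum_{j,k}h_{ijk}^{2}\big)$ and $\sum_{a,b,m}\lambda_{a}\lambda_{b}h_{abm}^{2}$; evaluating them along the sequence, where the eigenvalues tend to $(0,\pm\sqrt{S/2})$, should over-determine these moments and force a numerical inconsistency. I expect the main obstacle to be precisely this bookkeeping: organizing the third- and fourth-order identities, especially the twice-traced \eqref{2.1-20} together with \eqref{2.1-16}, so that the limiting eigenvalue pattern is excluded. It is exactly at this step that the constancy of $f_{4}$, and not merely of $S$, is indispensable.
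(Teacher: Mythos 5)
Your opening reduction is correct and is genuinely not in the paper: combining the two algebraic identities of Lemma \ref{lemma 2.3} gives $4H\lambda_{1}\lambda_{2}\lambda_{3}=f_{4}-\tfrac{1}{2}S^{2}-SH^{2}+\tfrac{1}{2}H^{4}$, and letting $H\to 0$ along a minimizing sequence (the $\lambda_{i}$ being bounded by $\sqrt{S}$) forces $f_{4}=\tfrac{1}{2}S^{2}$; feeding this back shows that $t=H/2$ is an eigenvalue wherever $H\neq 0$, the other two being $\tfrac{H}{4}\pm\tfrac{1}{2}\sqrt{2S-\tfrac{3}{4}H^{2}}$. Two remarks. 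First, at points where $H=0$ this identity degenerates to $0=0$, so to conclude that the limiting eigenvalues are $(0,\pm\sqrt{S/2})$ you must take the minimizing sequence in the closure of $\{H\neq 0\}$; this can be arranged (if $H\equiv 0$ then $X^{\perp}=0$, so $M$ is a smooth complete cone, hence a hyperplane, contradicting $S\geq 1$; and if $\{H=0\}$ is a nonempty proper subset, its boundary is nonempty by connectedness), but it needs to be said. Second, granting this, your reduction disposes in one stroke of what the paper handles piecemeal as Scenario 1, Scenario 2 and Case 2 of Scenario 3, and your $S=1$ case (parallel second fundamental form, hence a sphere--plane product, incompatible with nonzero limiting eigenvalues of opposite signs) is sound. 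Up to here the proposal is a genuine simplification of the paper's argument.

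The gap is the case $S>1$, and it is not a small one: excluding the limiting configuration $\bar\lambda_{1}=-\bar\lambda_{2}=\sqrt{S/2}$, $\bar\lambda_{3}=0$ is essentially the entire content of the theorem, and what you offer there is a plan (``should over-determine these moments and force a numerical inconsistency''), not an argument. Moreover, the tools you name do not suffice as described. Applying the generalized maximum principle to $H^{2}$ at its infimum $0$ yields nothing: along any minimizing sequence one automatically has $|\nabla H^{2}|=2|H|\,|\nabla H|\to 0$ and $\mathcal{L}H^{2}=2(1-S)H^{2}+2|\nabla H|^{2}\to 2\lim|\nabla H|^{2}\geq 0$, so no new constraint appears; in particular you cannot conclude $\nabla H\to 0$, and indeed the paper's Subcase 1.2 exhibits a configuration, excluded only at fourth order, in which $\bar H_{,1}=4\bar h_{111}\neq 0$. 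What actually eliminates the configuration $(0,\pm\sqrt{S/2})$ in the paper is a branching case analysis on the third-order coefficients ($\bar h_{113}$ zero or not, $2\bar h_{111}+\bar h_{133}$ zero or not, $\bar h_{111}=\pm\bar h_{112}$), each branch closed by a different mechanism drawn from the third and fourth covariant derivatives of $f_{4}$ (\eqref{2.1-19}, \eqref{2.1-20}), the Ricci identities \eqref{3.1-6}, and \eqref{2.1-16}: one branch dies because \eqref{3.1-33} forces $S=\tfrac{5}{2}$ while \eqref{2.1-16} forces $S=\tfrac{3}{2}$; another because it forces $\bar h_{112}^{2}=0$ against hypothesis; another because it forces $S=\tfrac{2}{3}<1$. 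Nothing in your moment-counting heuristic predicts or replaces this case analysis, and it is precisely here that the constancy of $f_{4}$ does its real work; until the configuration $(0,\pm\sqrt{S/2})$ is actually excluded for every $S>1$, the theorem is unproved.
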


\begin{proof}
If $\inf H^{2}=0$, there exists a sequence $\{p_{t}\}$ in $M^{3}$ such that
\begin{equation*}
\lim_{t\rightarrow\infty} H^{2}(p_{t})=\inf H^{2}=\bar H^2=0.
\end{equation*}

\noindent From \eqref{2.1-15},
\eqref{2.1-16} and $S$ being constant, we know that
$\{h_{ij}(p_{t})\}$,  $\{h_{ijk}(p_{t})\}$ and $\{h_{ijkl}(p_{t})\}$ are bounded sequences, one can assume
$$\lim_{t\rightarrow\infty}h_{ij}(p_{t})=\bar h_{ij}=\bar \lambda_i\delta_{ij}, \quad  \lim_{t\rightarrow\infty}h_{ijk}(p_{t})=\bar h_{ijk}, \quad \lim_{t\rightarrow\infty}h_{ijkl}(p_{t})=\bar h_{ijkl}$$
for $i, j, k, l=1, 2, 3$.
Then,
$$
\bar H=\sum_i \bar h_{ii}=\bar \lambda_{1}+\bar \lambda_{2}+\bar \lambda_{3}=0, \ S=\sum_{i,j}\bar h_{ij}^2=\bar \lambda^2_{1}+\bar \lambda^2_{2}+\bar \lambda^2_{3}=2(\bar \lambda^2_{1}+\bar \lambda^2_{2}+\bar \lambda_{1}\bar \lambda_{2}).
$$

\noindent From
$$H_{,i}=\sum_{k}h_{ik}\langle X, e_k\rangle, \ \ \text{\rm for} \ \ i=1, 2, 3,$$
we have
\begin{equation}\label{3.1-1}
\bar h_{11k}+\bar h_{22k}+\bar h_{33k}=\bar \lambda_{k}\lim_{t\rightarrow\infty} \langle X, e_{k} \rangle(p_{t}),\ \ \text{\rm for} \ \ k=1, 2, 3.
\end{equation}
Since
\begin{equation}\label{3.1-2}
\aligned
\nabla_{j}\nabla_{i}H
 =&\sum_{k}h_{ijk}\langle X,e_{k}\rangle+h_{ij}-H\sum_{k}h_{ik}h_{kj},
\endaligned
\end{equation}
we conclude
\begin{equation*}
\bar H_{,ij}=\sum_{k}\bar h_{ijk}\lim_{t\rightarrow\infty} \langle X,e_{k} \rangle(p_{t}) + \bar \lambda_{i} \delta_{ij} - \bar H\bar \lambda_{i}\bar \lambda_{j}\delta_{ij},
\end{equation*}
that is,
\begin{equation}\label{3.1-3}
\begin{cases}
\begin{aligned}
&\bar h_{1111}+\bar h_{2211}+\bar h_{3311}=\sum_{k}\bar h_{11k}\lim_{t\rightarrow\infty} \langle X,e_{k} \rangle(p_{t})+\bar \lambda_{1},\\
&\bar h_{1122}+\bar h_{2222}+\bar h_{3322}=\sum_{k}\bar h_{22k}\lim_{t\rightarrow\infty} \langle X,e_{k} \rangle(p_{t})+\bar \lambda_{2},\\
&\bar h_{1133}+\bar h_{2233}+\bar h_{3333}=\sum_{k}\bar h_{33k}\lim_{t\rightarrow\infty} \langle X,e_{k} \rangle(p_{t})+\bar \lambda_{3},\\
&\bar h_{1112}+\bar h_{2212}+\bar h_{3312}=\sum_{k}\bar h_{12k}\lim_{t\rightarrow\infty} \langle X,e_{k} \rangle(p_{t}),\\
&\bar h_{1113}+\bar h_{2213}+\bar h_{3313}=\sum_{k}\bar h_{13k}\lim_{t\rightarrow\infty} \langle X,e_{k} \rangle(p_{t}),\\
&\bar h_{1123}+\bar h_{2223}+\bar h_{3323}=\sum_{k}\bar h_{23k}\lim_{t\rightarrow\infty} \langle X,e_{k} \rangle(p_{t}).
\end{aligned}
\end{cases}
\end{equation}
Since $S$ is constant, we know
$$
\sum_{i,j}h_{ij}h_{ijk}=0, \ \ \text{for } \ k=1, 2, 3,
$$
$$
\sum_{i,j}h_{ij}h_{ijkl}+\sum_{i,j}h_{ijk}h_{ijl}=0, \ \ \text{for } \ k,l=1, 2, 3.
$$
Thus,
$$
\sum_{i,j}\bar h_{ij}\bar h_{ijk}=0, \ \ \text{for } \ k=1, 2, 3,
$$
$$
\sum_{i,j}\bar h_{ij}\bar h_{ijkl}+\sum_{i,j}\bar h_{ijk}\bar h_{ijl}=0, \ \ \text{for } \ k,l=1, 2, 3.
$$
Specifically,
\begin{equation}\label{3.1-4}
\bar\lambda_{1}\bar h_{11k}+\bar\lambda_{2}\bar h_{22k}+\bar\lambda_{3}\bar h_{33k}=0, \ \  \text{for } \ k=1, 2, 3,
\end{equation}
\begin{equation}\label{3.1-5}
\begin{cases}
\begin{aligned}
\bar \lambda_{1}\bar h_{1111}+\bar \lambda_{2}\bar h_{2211}+\bar \lambda_{3}\bar h_{3311}
=&-\bar h^{2}_{111}-\bar h^{2}_{221}-\bar h^{2}_{331}-2\bar h^{2}_{121}\\
   &-2\bar h^{2}_{131}-2\bar h^{2}_{231},\\
\bar \lambda_{1}\bar h_{1122}+\bar \lambda_{2}\bar h_{2222}+\bar \lambda_{3}\bar h_{3322}
=&-\bar h^{2}_{112}-\bar h^{2}_{222}-\bar h^{2}_{332}-2\bar h^{2}_{122}\\
   &-2\bar h^{2}_{132}-2\bar h^{2}_{232},\\
\bar \lambda_{1}\bar h_{1133}+\bar \lambda_{2}\bar h_{2233}+\bar \lambda_{3}\bar h_{3333}
=&-\bar h^{2}_{113}-\bar h^{2}_{223}-\bar h^{2}_{333}-2\bar h^{2}_{123}\\
  &-2\bar h^{2}_{133}-2\bar h^{2}_{233},\\
\bar \lambda_{1}\bar h_{1112}+\bar \lambda_{2}\bar h_{2212}+\bar \lambda_{3}\bar h_{3312}
=&-\bar h_{111}\bar h_{112}-\bar h_{221}\bar h_{222}-\bar h_{331}\bar h_{332}\\
    &-2\bar h_{121}\bar h_{122}-2\bar h_{131}\bar h_{132}-2\bar h_{231}\bar h_{232},\\
\bar \lambda_{1}\bar h_{1113}+\bar \lambda_{2}\bar h_{2213}+\bar \lambda_{3}\bar h_{3313}
=&-\bar h_{111}\bar h_{113}-\bar h_{221}\bar h_{223}-\bar h_{331}\bar h_{333}\\
   &-2\bar h_{121}\bar h_{123}-2\bar h_{131}\bar h_{133}-2\bar h_{231}\bar h_{233},\\
\bar \lambda_{1}\bar h_{1123}+\bar \lambda_{2}\bar h_{2223}+\bar \lambda_{3}\bar h_{3323}
=&-\bar h_{112}\bar h_{113}-\bar h_{222}\bar h_{223}-\bar h_{332}\bar h_{333}\\
 &-2\bar h_{122}\bar h_{123}-2\bar h_{132}\bar h_{133}-2\bar h_{232}\bar h_{233}.
\end{aligned}
\end{cases}
\end{equation}
From Ricci identities \eqref{2.1-7}, we obtain
\begin{equation*}
\bar h_{ijkl}-\bar h_{ijlk}=\bar\lambda_{i}\bar\lambda_{j}\bar\lambda_{k}\delta_{il}\delta_{jk}-\bar\lambda_{i}\bar\lambda_{j}\bar\lambda_{l}\delta_{ik}\delta_{jl}
+\bar\lambda_{i}\bar\lambda_{j}\bar\lambda_{k}\delta_{ik}\delta_{jl}-\bar\lambda_{i}\bar\lambda_{j}\bar\lambda_{l}\delta_{il}\delta_{jk},
\end{equation*}
that is,
\begin{equation}\label{3.1-6}
\begin{cases}
\begin{aligned}
&\bar h_{1212}-\bar h_{1221}=\bar \lambda_{1}\bar \lambda_{2}(\bar \lambda_{1}-\bar \lambda_{2}),\ \ \bar h_{1313}-\bar h_{1331}=\bar \lambda_{1}\bar \lambda_{3}(\bar \lambda_{1}-\bar \lambda_{3}),\\
&\bar h_{2323}-\bar h_{2332}=\bar \lambda_{2}\bar \lambda_{3}(\bar \lambda_{2}-\bar \lambda_{3}),\ \ \bar h_{iikl}-\bar h_{iilk}=0, \ \ \text{for} \ i,k,l=1, 2, 3.
\end{aligned}
\end{cases}
\end{equation}

\noindent From the lemma \ref{lemma 2.3}, we have

\begin{equation}\label{3.1-7}
\lim_{t\rightarrow\infty}\nabla_{k}f_{3}(p_{t})=3\bar \lambda^{2}_{1}\bar h_{11k}+3\bar \lambda^{2}_{2}\bar h_{22k}+3\bar \lambda^{2}_{3}\bar h_{33k}, \ \  \text{for } \ k=1, 2, 3,
\end{equation}

\begin{equation}\label{3.1-8}
\begin{cases}
\begin{aligned}
&\frac{1}{3}\lim_{t\rightarrow\infty}\nabla_{1}\nabla_{1}f_{3}(p_{t})\\
=&\bar \lambda^{2}_{1}\bar h_{1111}+\bar \lambda^{2}_{2}\bar h_{2211}+\bar \lambda^{2}_{3}\bar h_{3311}+2\bar \lambda_{1}(\bar h^{2}_{111}+\bar h^{2}_{121}+\bar h^{2}_{131})\\
&+2\bar \lambda_{2}(\bar h^{2}_{211}+\bar h^{2}_{221}+\bar h^{2}_{231})+2\bar \lambda_{3}(\bar h^{2}_{311}+\bar h^{2}_{321}+\bar h^{2}_{331}),\\

&\frac{1}{3}\lim_{t\rightarrow\infty}\nabla_{2}\nabla_{2}f_{3}(p_{t})\\
=&\bar \lambda^{2}_{1}\bar h_{1122}+\bar \lambda_{2}^{2}\bar h_{2222}+\bar \lambda^{2}_{3}\bar h_{3322}+2\bar \lambda_{1}(\bar h^{2}_{112}+\bar h^{2}_{122}+\bar h^{2}_{132})\\
&+2\bar \lambda_{2}(\bar h^{2}_{212}+\bar h^{2}_{222}+\bar h^{2}_{232})+2\bar \lambda_{3}(\bar h^{2}_{312}+\bar h^{2}_{322}+\bar h^{2}_{332}),\\

&\frac{1}{3}\lim_{t\rightarrow\infty}\nabla_{3}\nabla_{3}f_{3}(p_{t})\\
=&\bar \lambda^{2}_{1}\bar h_{1133}+\bar \lambda^{2}_{2}\bar h_{2233}+\bar \lambda^{2}_{3}\bar h_{3333}+2\bar \lambda_{1}(\bar h^{2}_{113}+\bar h^{2}_{123}+ \bar h^{2}_{133})\\
&+2\bar \lambda_{2}(\bar h^{2}_{213}+\bar h^{2}_{223}+\bar h^{2}_{233})+2\bar \lambda_{3}(\bar h^{2}_{313}+\bar h^{2}_{323}+\bar h^{2}_{333}),\\

&\frac{1}{3}\lim_{t\rightarrow\infty}\nabla_{2}\nabla_{1}f_{3}(p_{t})\\
=&\bar \lambda^{2}_{1}\bar h_{1112}+\bar \lambda^{2}_{2}\bar h_{2212}+\bar \lambda^{2}_{3}\bar h_{3312}+2\bar \lambda_{1}(\bar h_{111}\bar h_{112}+\bar h_{121}\bar h_{122}+\bar h_{131}\bar h_{132})\\
&+2\bar \lambda_{2}(\bar h_{211}\bar h_{212}+\bar h_{221}\bar h_{222}
+\bar h_{231}\bar h_{232})+2\bar \lambda_{3}(\bar h_{311}\bar h_{312}+\bar h_{321}\bar h_{322} \\
&+\bar h_{331}\bar h_{332}),\\

&\frac{1}{3}\lim_{t\rightarrow\infty}\nabla_{3}\nabla_{1}f_{3}(p_{t})\\
=&\bar \lambda^{2}_{1}\bar h_{1113}+\bar \lambda^{2}_{2}\bar h_{2213}+\bar \lambda^{2}_{3}\bar h_{3313}+2\bar \lambda_{1}(\bar h_{111}\bar h_{113}+\bar h_{121}\bar h_{123}+\bar h_{131}\bar h_{133})\\
&+2\bar \lambda_{2}(\bar h_{211}\bar h_{213}+\bar h_{221}\bar h_{223}
+\bar h_{231}\bar h_{233})+2\bar \lambda_{3}(\bar h_{311}\bar h_{313}+\bar h_{321}\bar h_{323} \\
&+\bar h_{331}\bar h_{333}),\\

&\frac{1}{3}\lim_{t\rightarrow\infty}\nabla_{3}\nabla_{2}f_{3}(p_{t})\\
=&\bar \lambda^{2}_{1}\bar h_{1123}+\bar \lambda^{2}_{2}\bar h_{2223}+\bar \lambda^{2}_{3}\bar h_{3323}+2\bar \lambda_{1}(\bar h_{112}\bar h_{113}+\bar h_{122}\bar h_{123}+\bar h_{132}\bar h_{133})\\
&+2\bar \lambda_{2}(\bar h_{212}\bar h_{213}+\bar h_{222}\bar h_{223}
+\bar h_{232}\bar h_{233})+2\bar \lambda_{3}(\bar h_{312}\bar h_{313}+\bar h_{322}\bar h_{323} \\
&+\bar h_{332}\bar h_{333}).
\end{aligned}
\end{cases}
\end{equation}
Since $f_{4}$ is constant, we know from the Lemma \ref{lemma 2.3},

\begin{equation}\label{3.1-9}
\bar\lambda^{3}_{1}\bar h_{11k}+\bar\lambda^{3}_{2}\bar h_{22k}+\bar\lambda^{3}_{3}\bar h_{33k}=0, \ \ \text{for } \ k=1, 2, 3,
\end{equation}

\begin{equation}\label{3.1-10}
\begin{cases}
\begin{aligned}
&\bar \lambda^{3}_{1}\bar h_{1111}+\bar \lambda^{3}_{2}\bar h_{2211}+\bar \lambda^{3}_{3}\bar h_{3311}\\
=&-3\bar \lambda^{2}_{1}\bar h^{2}_{111}-3\bar \lambda^{2}_{2}\bar h^{2}_{221}-3\bar \lambda^{2}_{3}
\bar h^{2}_{331}-2(\bar \lambda^{2}_{1}+\bar \lambda^{2}_{2}+\bar \lambda_{1}\bar \lambda_{2})\bar h^{2}_{121} \\
&-2(\bar\lambda^{2}_{1}+\bar\lambda^{2}_{3}+\bar\lambda_{1}\bar\lambda_{3})\bar h^{2}_{131}
-2(\bar\lambda^{2}_{2}+\bar\lambda^{2}_{3}+\bar\lambda_{2}\bar\lambda_{3})\bar h^{2}_{231},\\

&\bar \lambda^{3}_{1}\bar h_{1122}+\bar \lambda^{3}_{2}\bar h_{2222}+\bar \lambda^{3}_{3}\bar h_{3322}\\
=&-3\bar \lambda^{2}_{1}\bar h^{2}_{112}-3\bar \lambda^{2}_{2}\bar h^{2}_{222}-3\bar \lambda^{2}_{3}
\bar h^{2}_{332}-2(\bar \lambda^{2}_{1}+\bar \lambda^{2}_{2}+\bar \lambda_{1}\bar \lambda_{2})\bar h^{2}_{122} \\
&-2(\bar\lambda^{2}_{1}+\bar\lambda^{2}_{3}+\bar\lambda_{1}\bar\lambda_{3})\bar h^{2}_{132}
-2(\bar\lambda^{2}_{2}+\bar\lambda^{2}_{3}+\bar\lambda_{2}\bar\lambda_{3})\bar h^{2}_{232},\\

&\bar \lambda^{3}_{1}\bar h_{1133}+\bar \lambda^{3}_{2}\bar h_{2233}+\bar \lambda^{3}_{3}\bar h_{3333}\\
=&-3\bar \lambda^{2}_{1}\bar h^{2}_{113}-3\bar \lambda^{2}_{2}\bar h^{2}_{223}-3\bar \lambda^{2}_{3}
\bar h^{2}_{333}-2(\bar \lambda^{2}_{1}+\bar \lambda^{2}_{2}+\bar \lambda_{1}\bar \lambda_{2})\bar h^{2}_{123} \\
&-2(\bar\lambda^{2}_{1}+\bar\lambda^{2}_{3}+\bar\lambda_{1}\bar\lambda_{3})\bar h^{2}_{133}
-2(\bar\lambda^{2}_{2}+\bar\lambda^{2}_{3}+\bar\lambda_{2}\bar\lambda_{3})\bar h^{2}_{233},\\

&\bar \lambda^{3}_{1}\bar h_{1112}+\bar \lambda^{3}_{2}\bar h_{2212}+\bar \lambda^{3}_{3}\bar h_{3312}\\
=&-3\bar \lambda^{2}_{1}\bar h_{111}\bar h_{112}-3\bar \lambda^{2}_{2}\bar h_{221}\bar h_{222}-3\bar \lambda^{2}_{3}
\bar h_{331}\bar h_{332}-2(\bar \lambda^{2}_{1}+\bar \lambda^{2}_{2} \\
&+\bar \lambda_{1}\bar \lambda_{2})\bar h_{121}\bar h_{122}-2(\bar\lambda^{2}_{1}+\bar\lambda^{2}_{3}
+\bar\lambda_{1}\bar\lambda_{3})\bar h_{131}\bar h_{132}-2(\bar\lambda^{2}_{2}+\bar\lambda^{2}_{3} \\
&+\bar\lambda_{2}\bar\lambda_{3})\bar h_{231}\bar h_{232},\\

&\bar \lambda^{3}_{1}\bar h_{1113}+\bar \lambda^{3}_{2}\bar h_{2213}+\bar \lambda^{3}_{3}\bar h_{3313}\\
=&-3\bar \lambda^{2}_{1}\bar h_{111}\bar h_{113}-3\bar \lambda^{2}_{2}\bar h_{221}\bar h_{223}-3\bar \lambda^{2}_{3}
\bar h_{331}\bar h_{333}-2(\bar \lambda^{2}_{1}+\bar \lambda^{2}_{2} \\
&+\bar \lambda_{1}\bar \lambda_{2})\bar h_{121}\bar h_{123}-2(\bar\lambda^{2}_{1}+\bar\lambda^{2}_{3}
+\bar\lambda_{1}\bar\lambda_{3})\bar h_{131}\bar h_{133}-2(\bar\lambda^{2}_{2}+\bar\lambda^{2}_{3} \\
&+\bar\lambda_{2}\bar\lambda_{3})\bar h_{231}\bar h_{233},\\

&\bar \lambda^{3}_{1}\bar h_{1123}+\bar \lambda^{3}_{2}\bar h_{2223}+\bar \lambda^{3}_{3}\bar h_{3323}\\
=&-3\bar \lambda^{2}_{1}\bar h_{112}\bar h_{113}-3\bar \lambda^{2}_{2}\bar h_{222}\bar h_{223}-3\bar \lambda^{2}_{3}
\bar h_{332}\bar h_{333}-2(\bar \lambda^{2}_{1}+\bar \lambda^{2}_{2} \\
&+\bar \lambda_{1}\bar \lambda_{2})\bar h_{122}\bar h_{123}-2(\bar\lambda^{2}_{1}+\bar\lambda^{2}_{3}
+\bar\lambda_{1}\bar\lambda_{3})\bar h_{132}\bar h_{133}-2(\bar\lambda^{2}_{2}+\bar\lambda^{2}_{3} \\
&+\bar\lambda_{2}\bar\lambda_{3})\bar h_{232}\bar h_{233}.
\end{aligned}
\end{cases}
\end{equation}
\noindent Now we consider three scenarios.
\vskip2mm
\noindent {\bf 1. $\bar \lambda_1$, $\bar \lambda_2$ and $\bar \lambda_3$ are all equal.}

\noindent  From $\bar H=\bar \lambda_1+\bar \lambda_2+\bar \lambda_3=0$, $\bar \lambda_1=\bar \lambda_2=\bar \lambda_3=0$, we get $S=0$. It is impossible since $S\geq1$.

\vskip2mm
\noindent {\bf 2. Two of the values of $\bar \lambda_1$, $\bar \lambda_2$ and $\bar \lambda_3$  are equal.}

\noindent Without loss of generality, we assume that $\bar \lambda_1=\bar \lambda_2\neq \bar \lambda_3$.

\noindent From $\bar H=\bar \lambda_1+\bar \lambda_2+\bar \lambda_3=0$, we infer that $\bar \lambda_{1}=\bar \lambda_{2}\neq 0$ and $\bar\lambda_3\neq0$.

\noindent By \eqref{2.1-17} in the Lemma 2.3,  we obtain
$$\lim_{t\rightarrow\infty}f_{3}(p_{t})\neq 0; \ \ \ \ \ \bar H_{,k}=0 \ \ {\text for}\  k=1, 2, 3.$$

\noindent By \eqref{3.1-1} and $\bar H_{,k}=0$ for $k=1, 2, 3$, we have

$$\bar H_{,k}=\bar \lambda_{k}\lim_{t\rightarrow\infty}\langle T, e_{k}\rangle(p_{t})=0,  \ \ \lim_{t\rightarrow\infty}\langle T, e_{k}\rangle(p_{t})=0, \ \ {\text for}\ k=1, 2, 3.$$

\noindent From \eqref{3.1-3}, we have

\begin{equation}\label{3.1-11}
\begin{cases}
\begin{aligned}
&\bar h_{1111}+\bar h_{2211}+\bar h_{3311}=\bar \lambda_{1},\\
&\bar h_{1122}+\bar h_{2222}+\bar h_{3322}=\bar \lambda_{2},\\
&\bar h_{1133}+\bar h_{2233}+\bar h_{3333}=\bar \lambda_{3}.
\end{aligned}
\end{cases}
\end{equation}

\noindent From \eqref{2.1-18}, $\lim_{t\rightarrow\infty}f_{3}(p_{t})\neq 0$ and $\bar H_{,k}=0$ for $k=1, 2, 3$, we have
\begin{equation*}
\lim_{t\rightarrow\infty}\nabla_{l}\nabla_{k}f_{4}(p_{t})=0,\ \ \bar H_{,kl}=0,\ \ {\text for}\ k, l=1, 2, 3.
\end{equation*}
Then, it follows from \eqref{3.1-11} that $\bar H_{,kk}=\bar \lambda_{k}=0$ for $k=1, 2, 3$. It is a contradiction.

\vskip2mm
\noindent {\bf 3. The values of $\bar \lambda_1$, $\bar \lambda_{2}$ and $\bar \lambda_3$ are not equal to each other.}

\noindent {\bf Case 1: $\bar \lambda_1\bar \lambda_2\bar \lambda_3=0$}.

\noindent Without loss of generality, we assume that $\bar \lambda_3=0$. That is, $\bar \lambda_{1}\neq 0$, $\bar \lambda_{2}\neq 0$ and $\bar \lambda_{1}\neq\bar \lambda_{2}$.

\noindent From $\bar H=0$ and $S\neq0$, we have that $\bar \lambda_{1}=-\bar \lambda_{2}\neq 0$, $S=2\bar \lambda^{2}_{1}$ and $\lim_{t\rightarrow\infty}f_{3}(p_{t})=0$.

\noindent By \eqref{3.1-1} and \eqref{3.1-4}, we have
\begin{equation}\label{3.1-12}
\begin{cases}
\begin{aligned}
&\bar h_{111}=\bar h_{221},\ \ \bar h_{112}=\bar h_{222}, \ \ \bar h_{113}=\bar h_{223}, \\
&\lim_{t\rightarrow\infty}\langle X, e_{1}\rangle(p_{t})=\frac{2\bar h_{111}+\bar h_{133}}{\bar \lambda_{1}},\ \
 \lim_{t\rightarrow\infty}\langle X, e_{2}\rangle(p_{t})=-\frac{2\bar h_{112}+\bar h_{233}}{\bar \lambda_{1}}, \\
&\bar H_{,3}=0, \ \ \bar h_{333}=-2\bar h_{113}.
\end{aligned}
\end{cases}
\end{equation}

\noindent From \eqref{3.1-5}, \eqref{3.1-10} and \eqref{3.1-12}, we get
\begin{equation}\label{3.1-13}
\begin{cases}
\begin{aligned}
&\bar \lambda_{1}(\bar h_{1111}-\bar h_{2211})=-2\bar h^{2}_{111}-\bar h^{2}_{133}-2\bar h^{2}_{112}-2\bar h^{2}_{113}-2\bar h^{2}_{123}, \\
&\bar \lambda_{1}(\bar h_{1122}-\bar h_{2222})=-2\bar h^{2}_{112}-\bar h^{2}_{233}-2\bar h^{2}_{111}-2\bar h^{2}_{123}-2\bar h^{2}_{113}, \\
&\bar \lambda_{1}(\bar h_{1133}-\bar h_{2233})=-6\bar h^{2}_{113}-2\bar h^{2}_{123}-2\bar h^{2}_{133}-2\bar h^{2}_{233}, \\
&\bar \lambda_{1}(\bar h_{1112}-\bar h_{2212})=-4\bar h_{111}\bar h_{112}-\bar h_{133}\bar h_{233}-4\bar h_{113}\bar h_{123}, \\
&\bar \lambda_{1}(\bar h_{1113}-\bar h_{2213})=-2\bar h_{111}\bar h_{113}-2\bar h_{112}\bar h_{123}-2\bar h_{233}\bar h_{123}, \\
&\bar \lambda_{1}(\bar h_{1123}-\bar h_{2223})=-2\bar h_{112}\bar h_{113}-2\bar h_{111}\bar h_{123}-2\bar h_{133}\bar h_{123},
\end{aligned}
\end{cases}
\end{equation}
and
\begin{equation}\label{3.1-14}
\begin{cases}
\begin{aligned}
\bar \lambda^{3}_{1}(\bar h_{1111}-\bar h_{2211})
=&-6\bar \lambda^{2}_{1}\bar h^{2}_{111}-2\bar \lambda^{2}_{1}\bar h^{2}_{112}-2\bar \lambda^{2}_{1}\bar h^{2}_{113}-2\bar \lambda^{2}_{1}\bar h^{2}_{123},\\

\bar \lambda^{3}_{1}(\bar h_{1122}-\bar h_{2222})
=&-6\bar \lambda^{2}_{1}\bar h^{2}_{112}-2\bar \lambda^{2}_{1}\bar h^{2}_{111}-2\bar \lambda^{2}_{1}\bar h^{2}_{123}-2\bar \lambda^{2}_{1}\bar h^{2}_{113},\\

\bar \lambda^{3}_{1}(\bar h_{1133}-\bar h_{2233})
=&-6\bar \lambda^{2}_{1}\bar h^{2}_{113}-2\bar \lambda^{2}_{1}\bar h^{2}_{123}-2\bar \lambda^{2}_{1}\bar h^{2}_{133}-2\bar \lambda^{2}_{1}\bar h^{2}_{233},\\
\bar \lambda^{3}_{1}(\bar h_{1112}-\bar h_{2212})
=&-8\bar \lambda^{2}_{1}\bar h_{111}\bar h_{112}-4\bar \lambda^{2}_{1}\bar h_{113}\bar h_{123},\\

\bar \lambda^{3}_{1}(\bar h_{1113}-\bar h_{2213})
=&-6\bar \lambda^{2}_{1}\bar h_{111}\bar h_{113}-2\bar \lambda^{2}_{1}\bar h_{112}\bar h_{123}-2\bar \lambda^{2}_{1}\bar h_{113}\bar h_{133}\\
 &-2\bar \lambda^{2}_{1}\bar h_{233}\bar h_{123},\\

\bar \lambda^{3}_{1}(\bar h_{1123}-\bar h_{2223})
=&-6\bar \lambda^{2}_{1}\bar h_{112}\bar h_{113}-2\bar \lambda^{2}_{1}\bar h_{111}\bar h_{123}-2\bar \lambda^{2}_{1}\bar h_{133}\bar h_{123}\\
 &-2\bar \lambda^{2}_{1}\bar h_{113}\bar h_{233}.\\
\end{aligned}
\end{cases}
\end{equation}

\noindent From \eqref{3.1-13} and \eqref{3.1-14}, we have
\begin{equation}\label{3.1-15}
\begin{aligned}
& 4\bar h^{2}_{111}=\bar h^{2}_{133}, \ \ \ \  4\bar h^{2}_{112}=\bar h^{2}_{233},\ \ \ \ 4\bar h_{111}\bar h_{112}=\bar h_{133}\bar h_{233},\\
&\bar h_{113}(2\bar h_{111}+\bar h_{133})=0, \ \ \ \ \bar h_{113}(2\bar h_{112}+\bar h_{233})=0.
\end{aligned}
\end{equation}

\noindent Supposing $\bar h_{113} \neq 0$. From \eqref{3.1-1} and \eqref{3.1-15}, we know
\begin{equation}\label{3.1-16}
\begin{aligned}
&2\bar h_{111}+\bar h_{133}=0, \ \ 2\bar h_{112}+\bar h_{233}=0, \\
&\bar H_{,1}=2\bar h_{111}+\bar h_{133}=\bar \lambda_{1}\lim_{t\rightarrow\infty}\langle X, e_{1}\rangle(p_{t})=0, \ \ \lim_{t\rightarrow\infty}\langle X, e_{1}\rangle(p_{t})=0,\\
&\bar H_{,2}=2\bar h_{112}+\bar h_{233}=\bar \lambda_{2}\lim_{t\rightarrow\infty}\langle X, e_{2}\rangle(p_{t})=0, \ \ \lim_{t\rightarrow\infty}\langle X, e_{2}\rangle(p_{t})=0,
\end{aligned}
\end{equation}
and then, $\bar H_{,k}=0$ for $k=1, 2, 3$.

\noindent From \eqref{3.1-7}, we know
\begin{equation}\label{3.1-17}
\lim_{t\rightarrow\infty}\nabla_{3}f_{3}(p_{t})=6\bar \lambda^{2}_{1}\bar h_{113}=3S\bar h_{113}.
\end{equation}

\noindent By \eqref{3.1-3} and \eqref{3.1-12}, we have
\begin{equation}\label{3.1-18}
\bar H_{,33}=\bar h_{1133}+\bar h_{2233}+\bar h_{3333}=-2\bar h_{113}\lim_{t\rightarrow\infty}\langle X, e_{3}\rangle(p_{t}).
\end{equation}

\noindent From \eqref{2.1-19}, $\lim_{t\rightarrow\infty}f_{3}(p_{t})=0$ and $\bar H_{,k}=0$ for $k=1, 2, 3$, we know
\begin{equation}\label{3.1-19}
\frac{4}{3}\bar H_{,kl}\lim_{t\rightarrow\infty}\nabla_{m}f_{3}(p_{t})+\frac{4}{3}\bar H_{,km}\lim_{t\rightarrow\infty}\nabla_{l}f_{3}(p_{t})
+\frac{4}{3}\bar H_{,lm}\lim_{t\rightarrow\infty}\nabla_{k}f_{3}(p_{t})=0,
\end{equation}
where $k, l, m=1, 2, 3$.

\noindent Choosing $k=l=m=3$ in \eqref{3.1-19}, and by \eqref{3.1-17} and \eqref{3.1-18}, we obtain

\begin{equation*}
0= 4\bar H_{,33}\lim_{t\rightarrow\infty}\nabla_{3}f_{3}(p_{t})=-24S\bar h^{2}_{113}\lim_{t\rightarrow\infty}\langle X, e_{3}\rangle(p_{t}).
\end{equation*}
Therefore,
\begin{equation}\label{3.1-20}
\lim_{t\rightarrow\infty}\langle X, e_{3}\rangle(p_{t})=0, \ \ \bar H_{,33}=0.
\end{equation}

\noindent From $\lim_{t\rightarrow\infty}f_{3}(p_{t})=0$, $\bar H_{,k}=0$ for $k=1, 2, 3$ and choosing $k=l=m=n=3$ in \eqref{2.1-20}, we know
\begin{equation*}
\frac{16}{3}\lim_{t\rightarrow\infty}\nabla_{3}f_{3}(p_{t})\bar H_{,333}=0,
\end{equation*}
and then, $$\bar H_{,333}=0.$$

\noindent Since
\begin{equation*}
\begin{aligned}
\nabla_{l}\nabla_{k}\nabla_{j}H
 =&\sum_{i}h_{ijkl}\langle X,e_{i}\rangle+2h_{jkl}-\sum_{i}(h_{ijk}h_{il}+h_{ijl}h_{ik}+h_{ikl}h_{ij})H \\
  &-\sum_{i,p}h_{ij}h_{ik}h_{lp}\langle X,e_{p}\rangle,
\end{aligned}
\end{equation*}
we have
\begin{equation*}
\bar H_{,333}=2\bar h_{333}=0, \ \ \bar h_{113}=-\frac{1}{2}\bar h_{333}=0.
\end{equation*}
It contradicts the hypothesis. We have $\bar h_{113}=0$.

\noindent {\bf Subcase 1.1: $2\bar h_{111}+\bar h_{133}=0$.}

\noindent Supposing $\bar h_{111} \neq 0$. From $\bar h_{113}=0$, \eqref{3.1-12} and \eqref{3.1-15}, we have
\begin{equation*}
\bar h_{223}=\bar h_{333}=0, \ \ 2\bar h_{112}+\bar h_{233}=0.
\end{equation*}
Then,
\begin{equation*}
\lim_{t\rightarrow\infty}\langle T, e_{1}\rangle(p_{t})=\lim_{t\rightarrow\infty}\langle T, e_{2}\rangle(p_{t})=0,\ \ \bar H_{,k}=0, \ \ \text{for }\ k=1, 2, 3.
\end{equation*}

\noindent From \eqref{3.1-3} and \eqref{3.1-7}, we have
\begin{equation*}
\bar H_{,11}=\bar \lambda_{1}, \ \ \lim_{t\rightarrow\infty}\nabla_{1}f_{3}(p_{t})=6\bar \lambda^{2}_{1}\bar h_{111}=3S\bar h_{111}.
\end{equation*}

\noindent Choosing $k=l=m=1$ in \eqref{3.1-19}, we obtain
\begin{equation*}
0=4\lim_{t\rightarrow\infty}\nabla_{1}f_{3}(p_{t})\bar H_{,11}=12\bar \lambda_{1}S\bar h_{111},
\end{equation*}
and then,
\begin{equation*}
\bar h_{111}=0,
\end{equation*}
It contradicts the hypothesis. We have that $\bar h_{111}=\bar h_{133}=0$.

\noindent From \eqref{3.1-15}, we have that either $2\bar h_{112}+\bar h_{233}=0$ or $2\bar h_{112}-\bar h_{233}=0$.

\noindent If $2\bar h_{112}+\bar h_{233}=0$, from $2\bar h_{111}+\bar h_{133}=0$ and \eqref{3.1-1}, we know
\begin{equation*}
\begin{aligned}
&\bar H_{,1}=\bar \lambda_{1}\lim_{t\rightarrow\infty} \langle X, e_{2} \rangle(p_{t})=0, \ \ \lim_{t\rightarrow\infty}\langle X, e_{1}\rangle(p_{t})=0,\\
&\bar H_{,2}=\bar \lambda_{2}\lim_{t\rightarrow\infty} \langle X, e_{2} \rangle(p_{t})=0, \ \ \lim_{t\rightarrow\infty}\langle X, e_{2}\rangle(p_{t})=0,
\end{aligned}
\end{equation*}
and then, $\bar H_{,k}=0$ for $k=1, 2, 3$.

\noindent From \eqref{3.1-3} and \eqref{3.1-7}, we obtain
\begin{equation*}
\bar H_{,22}=-\bar \lambda_{1}, \ \ \lim_{t\rightarrow\infty}\nabla_{2}f_{3}(p_{t})=6\bar \lambda^{2}_{1}\bar h_{112}=3S\bar h_{112}.
\end{equation*}

\noindent Choosing $k=l=m=2$ in \eqref{3.1-19}, we obtain
\begin{equation*}
0=4\lim_{t\rightarrow\infty}\nabla_{2}f_{3}(p_{t})\bar H_{,22}
 =-12\bar \lambda_{1}S\bar h_{112},
\end{equation*}
and then,
\begin{equation*}
\bar h_{112}=\bar h_{233}=0, \ \ \lim_{t\rightarrow\infty}\nabla_{2}f_{3}(p_{t})=0.
\end{equation*}
\noindent From \eqref{2.1-20}, $\lim_{t\rightarrow\infty}f_{3}(p_{t})=0$ and $\bar H_{,k}=0$ for $k=1, 2, 3$, we know
\begin{equation}\label{3.1-21}
\begin{aligned}
&\frac{4}{3}\lim_{t\rightarrow\infty}\nabla_{n}f_{3}(p_{t})\bar H_{,klm}+\frac{4}{3}\lim_{t\rightarrow\infty}\nabla_{m}f_{3}(p_{t})\bar H_{,kln}
+\frac{4}{3}\lim_{t\rightarrow\infty}\nabla_{n}\nabla_{m}f_{3}(p_{t})\bar H_{,kl} \\
&-2S\bar H_{,mn}\bar H_{,kl}+\frac{4}{3}\bar H_{,mn}\lim_{t\rightarrow\infty}\nabla_{l}\nabla_{k}f_{3}(p_{t})
+\frac{4}{3}\lim_{t\rightarrow\infty}\nabla_{l}f_{3}(p_{t})\bar H_{,kmn}\\
&+\frac{4}{3}\lim_{t\rightarrow\infty}\nabla_{n}\nabla_{l}f_{3}(p_{t})\bar H_{,km}+\frac{4}{3}\lim_{t\rightarrow\infty}\nabla_{m}\nabla_{l}f_{3}(p_{t})\bar H_{,kn}+\frac{4}{3}\bar H_{,ln}\lim_{t\rightarrow\infty}\nabla_{m}\nabla_{k}f_{3}(p_{t}) \\
&+\frac{4}{3}\bar H_{,lmn}\lim_{t\rightarrow\infty}\nabla_{k}f_{3}(p_{t})+\frac{4}{3}\bar H_{,lm}\lim_{t\rightarrow\infty}\nabla_{n}\nabla_{k}f_{3}(p_{t})-2S\bar H_{,km}\bar H_{,ln}\\
&-2S\bar H_{,kn}\bar H_{,lm}=0,
\end{aligned}
\end{equation}
where $k, l, m, n=1, 2, 3$.

\noindent From $\bar h_{111}=\bar h_{112}=\bar h_{113}=0$, \eqref{3.1-7} and  \eqref{3.1-12}, we have
\begin{equation}\label{3.1-22}
\lim_{t\rightarrow\infty}\nabla_{k}f_{3}(p_{t})=6\bar \lambda^{2}_{k}\bar h_{11k}=0, \ \ k=1, 2, 3.
\end{equation}

\noindent From \eqref{3.1-21} and \eqref{3.1-22}, we have
\begin{equation}\label{3.1-23}
\begin{cases}
\begin{aligned}
&\frac{4}{3}\lim_{t\rightarrow\infty}\nabla_{n}\nabla_{m}f_{3}(p_{t})\bar H_{,kl}+\frac{4}{3}\bar H_{,mn}\lim_{t\rightarrow\infty}\nabla_{l}\nabla_{k}f_{3}(p_{t})+\frac{4}{3}\lim_{t\rightarrow\infty}\nabla_{n}\nabla_{l}f_{3}(p_{t})\bar H_{,km} \\
&+\frac{4}{3}\lim_{t\rightarrow\infty}\nabla_{m}\nabla_{l}f_{3}(p_{t})\bar H_{,kn}+\frac{4}{3}\bar H_{,ln}\lim_{t\rightarrow\infty}\nabla_{m}\nabla_{k}f_{3}(p_{t})+\frac{4}{3}\bar H_{,lm}\lim_{t\rightarrow\infty}\nabla_{n}\nabla_{k}f_{3}(p_{t}) \\
&-2S\bar H_{,mn}\bar H_{,kl}-2S\bar H_{,km}\bar H_{,ln}-2S\bar H_{,kn}\bar H_{,lm}=0,
\ \ \text{for } \ k, l, m, n=1, 2, 3.
\end{aligned}
\end{cases}
\end{equation}

\noindent Using \eqref{3.1-3} and \eqref{3.1-13}, we have that
\begin{equation}\label{3.1-24}
\begin{cases}
\begin{aligned}
&\bar H_{,11}=\bar h_{1111}+\bar h_{2211}+\bar h_{3311}=\bar \lambda_{1},\\
&\bar H_{,22}=\bar h_{1122}+\bar h_{2222}+\bar h_{3322}=-\bar \lambda_{1},\\
&\bar H_{,33}=\bar h_{1133}+\bar h_{2233}+\bar h_{3333}=0,\\
&\bar H_{,12}=\bar h_{1112}+\bar h_{2212}+\bar h_{3312}=\bar h_{123}\lim_{t\rightarrow\infty}\langle X,e_{3} \rangle(p_{t}),\\
&\bar H_{,13}=\bar h_{1113}+\bar h_{2213}+\bar h_{3313}=0,\\
&\bar H_{,23}=\bar h_{1123}+\bar h_{2223}+\bar h_{3323}=0,
\end{aligned}
\end{cases}
\end{equation}
and
\begin{equation}\label{3.1-25}
\begin{cases}
\begin{aligned}
&\bar \lambda_{1}(\bar h_{1111}-\bar h_{2211})=-2\bar h^{2}_{123},\\
&\bar \lambda_{1}(\bar h_{1122}-\bar h_{2222})=-2\bar h^{2}_{123},\\
&\bar \lambda_{1}(\bar h_{1133}-\bar h_{2233})=-2\bar h^{2}_{123},\\
&\bar \lambda_{1}(\bar h_{1112}-\bar h_{2212})=0,\\
&\bar \lambda_{1}(\bar h_{1113}-\bar h_{2213})=0,\\
&\bar \lambda_{1}(\bar h_{1123}-\bar h_{2223})=0.
\end{aligned}
\end{cases}
\end{equation}

\noindent Choosing $k=l=m=n=1$; $k=l=m=n=2$; $k=l=m=n=3$ and $k=l=1,\ \ m=n=2$ in \eqref{3.1-23}, respectively, we obtain
\begin{equation*}
\begin{cases}
\begin{aligned}
&\bar H_{,11}\lim_{t\rightarrow\infty}\nabla_{1}\nabla_{1}f_{3}(p_{t})=\frac{3}{4}S(\bar H_{,11})^{2},\\
&\bar H_{,22}\lim_{t\rightarrow\infty}\nabla_{2}\nabla_{2}f_{3}(p_{t})=\frac{3}{4}S(\bar H_{,22})^{2},\\
&\bar H_{,33}\lim_{t\rightarrow\infty}\nabla_{3}\nabla_{3}f_{3}(p_{t})=\frac{3}{4}S(\bar H_{,33})^{2},\\
&\frac{4}{3}\lim_{t\rightarrow\infty}\nabla_{1}\nabla_{1}f_{3}(p_{t})\bar H_{,22}+\frac{4}{3}\lim_{t\rightarrow\infty}\nabla_{2}\nabla_{2}f_{3}(p_{t})\bar H_{,11}\\
&+\frac{16}{3}\lim_{t\rightarrow\infty}\nabla_{2}\nabla_{1}f_{3}(p_{t})\bar H_{,12} -2S\bar H_{,11}\bar H_{,22}-4S(\bar H_{,12})^{2}=0.
\end{aligned}
\end{cases}
\end{equation*}
Then,
\begin{equation}\label{3.1-26}
\begin{cases}
\begin{aligned}
&\lim_{t\rightarrow\infty}\nabla_{1}\nabla_{1}f_{3}(p_{t})=\frac{3}{4}S\bar H_{,11},\\
&\lim_{t\rightarrow\infty}\nabla_{2}\nabla_{2}f_{3}(p_{t})=\frac{3}{4}S\bar H_{,22},\\
&\lim_{t\rightarrow\infty}\nabla_{3}\nabla_{3}f_{3}(p_{t})=\frac{3}{4}S\bar H_{,33},\\
&\frac{4}{3}\lim_{t\rightarrow\infty}\nabla_{1}\nabla_{1}f_{3}(p_{t})\bar H_{,22}+\frac{4}{3}\lim_{t\rightarrow\infty}\nabla_{2}\nabla_{2}f_{3}(p_{t})\bar H_{,11}\\
&+\frac{16}{3}\lim_{t\rightarrow\infty}\nabla_{2}\nabla_{1}f_{3}(p_{t})\bar H_{,12} -2S\bar H_{,11}\bar H_{,22}-4S(\bar H_{,12})^{2}=0.
\end{aligned}
\end{cases}
\end{equation}

\noindent From \eqref{3.1-8}, \eqref{3.1-24} and \eqref{3.1-26}, we know
\begin{equation*}
\begin{cases}
\begin{aligned}
&\lim_{t\rightarrow\infty}\nabla_{1}\nabla_{1}f_{3}(p_{t})=3\bar \lambda^{2}_{1}(\bar h_{1111}+\bar h_{2211})-6\bar \lambda_{1}\bar h^{2}_{123}
=\frac{3}{4}S\bar \lambda_{1},\\
&\lim_{t\rightarrow\infty}\nabla_{2}\nabla_{2}f_{3}(p_{t})=3\bar \lambda^{2}_{1}(\bar h_{1122}+\bar h_{2222})+6\bar \lambda_{1}\bar h^{2}_{123}
=-\frac{3}{4}S\bar \lambda_{1},\\
&\lim_{t\rightarrow\infty}\nabla_{3}\nabla_{3}f_{3}(p_{t})=3\bar \lambda^{2}_{1}(\bar h_{1133}+\bar h_{2233})=0,\\
&\frac{4}{3}\lim_{t\rightarrow\infty}\nabla_{1}\nabla_{1}f_{3}(p_{t})\bar H_{,22}+\frac{4}{3}\lim_{t\rightarrow\infty}\nabla_{2}\nabla_{2}f_{3}(p_{t})\bar H_{,11}+\frac{16}{3}\lim_{t\rightarrow\infty}\nabla_{2}\nabla_{1}f_{3}(p_{t})\bar H_{,12} \\
&-2S\bar H_{,11}\bar H_{,22}-4S(\bar H_{,12})^{2}=0.
\end{aligned}
\end{cases}
\end{equation*}
Then,
\begin{equation}\label{3.1-27}
\begin{cases}
\begin{aligned}
&\bar \lambda_{1}(\bar h_{1111}+\bar h_{2211})=\frac{1}{4}S+2\bar h^{2}_{123},\\
&\bar \lambda_{1}(\bar h_{1122}+\bar h_{2222})=-\frac{1}{4}S-2\bar h^{2}_{123},\\
&\bar h_{1133}=-\bar h_{2233},\\
&\frac{16}{3}\lim_{t\rightarrow\infty}\nabla_{2}\nabla_{1}f_{3}(p_{t})\bar H_{,12}-4S(\bar H_{,12})^{2}=0.
\end{aligned}
\end{cases}
\end{equation}

\noindent If $\bar h_{123}=0$, we have $\bar h_{ijk}=0$ for  $i, j, k=1, 2, 3$.
From \eqref{2.1-16} and \eqref{3.1-24},
we have $\bar h_{ijkl}=0$ for $i, j, k, l=1, 2, 3$, and then $\bar \lambda_{1}=0$, this contradicts the hypothesis.
Hence, we get $\bar h_{123}\neq 0$.

\noindent From \eqref{2.1-15} in Lemma 2.1, we have
\begin{equation*}
\sum_{i,j,k}h_{ijk}^2+(1-S)S=0, \ \ \sum_{i,j,k}h_{ijk}h_{ijkl}=0,
\end{equation*}
for $l=1,2,3$.
Then,
\begin{equation}\label{3.1-28}
\begin{aligned}
&\bar h_{123}^2=\frac{1}{6}(S-1)S, \\
&6\bar h_{123}\bar h_{1231}=0, \ \ \bar h_{1231}=\bar h_{1123}=0,\\
&6\bar h_{123}\bar h_{1232}=0, \ \ \bar h_{1232}=\bar h_{2213}=0,\\
&6\bar h_{123}\bar h_{1233}=0, \ \ \bar h_{1233}=\bar h_{3312}=0.
\end{aligned}
\end{equation}

\noindent From \eqref{3.1-24}, \eqref{3.1-25} and \eqref{3.1-28}, we know

\begin{equation}\label{3.1-29}
\begin{cases}
\begin{aligned}
&\bar h_{1111}-\bar h_{2211}=-\frac{2\bar h^{2}_{123}}{\bar \lambda_{1}}, \ \ \bar h_{1122}-\bar h_{2222}=-\frac{2\bar h^{2}_{123}}{\bar \lambda_{1}},\\
&\bar h_{1133}-\bar h_{2233}=-\frac{2\bar h^{2}_{123}}{\bar \lambda_{1}}, \ \  \bar h_{1112}=\bar h_{2212}, \ \ \bar h_{3312}=0,\\
&\bar h_{1113}=\bar h_{2213}=\bar h_{3313}=0, \ \ \bar h_{1123}=\bar h_{2223}=\bar h_{3323}=0.
\end{aligned}
\end{cases}
\end{equation}
By \eqref{3.1-8} and \eqref{3.1-29}, one has
\begin{equation}\label{3.1-30}
 \frac{1}{3}\lim_{t\rightarrow\infty}\nabla_{2}\nabla_{1}f_{3}(p_t)=2\bar \lambda^{2}_{1}\bar h_{1112}=S\bar h_{1112}, \ \ \bar H_{,12}=2\bar h_{1112}.
\end{equation}
From the fourth equation in \eqref{3.1-27} and \eqref{3.1-30}, we obtain
\begin{equation}\label{3.1-31}
\bar h_{1112}=0.
\end{equation}

\noindent From \eqref{3.1-24}, \eqref{3.1-27} and \eqref{3.1-29}, we get

\begin{equation}\label{3.1-32}
\begin{cases}
\begin{aligned}
&\bar h_{1111}=\frac{S}{8\bar \lambda_{1}}, \ \ h_{2211}=\frac{S}{8\bar \lambda_{1}}+\frac{2\bar h^{2}_{123}}{\bar \lambda_{1}}, \ \
\bar h_{3311}=\frac{S}{4\bar \lambda_{1}}-\frac{2\bar h^{2}_{123}}{\bar \lambda_{1}},\\
&\bar h_{1122}=-(\frac{S}{8\bar \lambda_{1}}+\frac{2\bar h^{2}_{123}}{\bar \lambda_{1}}), \ \ \bar h_{2222}=-\frac{S}{8\bar \lambda_{1}}, \ \
\bar h_{3322}=-\frac{S}{4\bar \lambda_{1}}+\frac{2\bar h^{2}_{123}}{\bar \lambda_{1}},\\
&\bar h_{1133}=-\frac{\bar h^{2}_{123}}{\bar \lambda_{1}}, \ \ \bar h_{2233}=\frac{\bar h^{2}_{123}}{\bar \lambda_{1}}, \ \ \bar h_{3333}=0.
\end{aligned}
\end{cases}
\end{equation}

\noindent From Ricci identities \eqref{3.1-6}, \eqref{3.1-28} and \eqref{3.1-32}, we have

\begin{equation}\label{3.1-33}
\bar h_{1133}-\bar h_{3311}=-\frac{S}{4\bar \lambda_{1}}+\frac{\bar h^{2}_{123}}{\bar \lambda_{1}}=\bar \lambda_{1}\bar \lambda_{3}(\bar \lambda_{1}-\bar \lambda_{3})=0.
\end{equation}
Then, $$S=\frac{5}{2}.$$

\noindent From \eqref{2.1-16}, \eqref{3.1-29}, \eqref{3.1-31} and \eqref{3.1-32}, we have
\begin{equation*}
\begin{aligned}
0=&\frac{1}{2}\lim_{t\rightarrow\infty}\mathcal{L}\sum_{i, j,k}(h_{ijk})^{2}(p_{t})\\
 =&\sum_{i,j,k,l}(\bar h_{ijkl})^{2}+(2-S)\sum_{i,j,k}(\bar h_{ijk})^{2}+6\sum_{i,j,k,l,p}\bar h_{ijk}\bar h_{il}\bar h_{jp}\bar h_{klp}\\
 &-3\sum_{i,j,k,l,p}\bar h_{ijk}\bar h_{ijl}\bar h_{kp}\bar h_{lp}\\
 =&\bar h_{1111}^2+\bar h_{2222}^2+3\bar h_{1122}^2+3\bar h_{2211}^2+3\bar h_{1133}^2+3\bar h_{3311}^2+3\bar h_{2233}^2+3\bar h_{3322}^2\\
&+6(2-S)\bar h_{123}^2-6S\bar h_{123}^2-6S\bar h_{123}^2\\
=&S+\frac{108}{S}\bar h_{123}^4+6\bar h_{123}^2-18 S\bar h_{123}^2\\
 =&S(3-2S),
\end{aligned}
\end{equation*}
then $S=\frac{3}{2}$ which contradicts to $S=\frac{5}{2}$,  where we use $\bar h_{123}^2=\frac{1}{6}S(S-1)$.

\noindent If $2\bar h_{112}-\bar h_{233}=0$, from \eqref{3.1-7} and \eqref{3.1-12}, we know
\begin{equation}\label{3.1-34}
\begin{aligned}
&\lim_{t\rightarrow\infty}\nabla_{2}f_{3}(p_{t})=6\bar \lambda^{2}_{1}\bar h_{112}=3S\bar h_{112},\\
&\bar H_{,2}=4\bar h_{112}, \ \ \lim_{t\rightarrow\infty}\langle X, e_{2}\rangle(p_{t})=-\frac{4\bar h_{112}}{\bar \lambda_{1}}.
\end{aligned}
\end{equation}

\noindent By \eqref{3.1-3} and \eqref{3.1-34}, we have
\begin{equation}\label{3.1-35}
\begin{cases}
\begin{aligned}
&\bar H_{,11}=\bar h_{1111}+\bar h_{2211}+\bar h_{3311}=-\frac{4\bar h^{2}_{112}}{\bar \lambda_{1}}+\bar \lambda_{1},\\
&\bar H_{,22}=\bar h_{1122}+\bar h_{2222}+\bar h_{3322}=-\frac{4\bar h^{2}_{112}}{\bar \lambda_{1}}-\bar \lambda_{1},\\
&\bar H_{,33}=\bar h_{1133}+\bar h_{2233}+\bar h_{3333}=-\frac{8\bar h^{2}_{112}}{\bar \lambda_{1}}.
\end{aligned}
\end{cases}
\end{equation}

\noindent From \eqref{3.1-8} and \eqref{3.1-13}, we obtain
\begin{equation}\label{3.1-36}
\begin{cases}
\begin{aligned}
&\frac{1}{3}\lim_{t\rightarrow\infty}\nabla_{1}\nabla_{1}f_{3}(p_{t})=\bar \lambda^{2}_{1}(\bar h_{1111}+\bar h_{2211})-2\bar \lambda_{1}\bar h^{2}_{123},\\
&\frac{1}{3}\lim_{t\rightarrow\infty}\nabla_{2}\nabla_{2}f_{3}(p_{t})=\bar \lambda^{2}_{1}(\bar h_{1122}+\bar h_{2222})+2\bar \lambda_{1}\bar h^{2}_{123},\\
&\frac{1}{3}\lim_{t\rightarrow\infty}\nabla_{3}\nabla_{3}f_{3}(p_{t})=\bar \lambda^{2}_{1}(\bar h_{1133}+\bar h_{2233})-8\bar \lambda_{1}\bar h^{2}_{112},
\end{aligned}
\end{cases}
\end{equation}
and
\begin{equation}\label{3.1-37}
\begin{cases}
\begin{aligned}
&\bar \lambda_{1}(\bar h_{1111}-\bar h_{2211})=-2\bar h^{2}_{112}-2\bar h^{2}_{123},\\
&\bar \lambda_{1}(\bar h_{1122}-\bar h_{2222})=-6\bar h^{2}_{112}-2\bar h^{2}_{123},\\
&\bar \lambda_{1}(\bar h_{1133}-\bar h_{2233})=-8\bar h^{2}_{112}-2\bar h^{2}_{123}.
\end{aligned}
\end{cases}
\end{equation}

\noindent From  $\lim_{t\rightarrow\infty}f_{3}(p_{t})=0$ and \eqref{2.1-19} in the Lemma \ref{lemma 2.3}, we know
\begin{equation}\label{3.1-38}
\begin{aligned}
&\frac{4}{3}\nabla_{m}f_{3}H_{,kl}-2SH_{,m}H_{,kl}+\frac{4}{3}H_{,m}\nabla_{l}\nabla_{k}f_{3}+\frac{4}{3}\nabla_{l}f_{3} H_{,km}+\frac{4}{3}\nabla_{m}\nabla_{l}f_{3} H_{,k} \\
&+\frac{4}{3}H_{,l}\nabla_{m}\nabla_{k}f_{3}+\frac{4}{3}H_{,lm}\nabla_{k}f_{3}-2SH_{,km}H_{,l}-2SH_{,k}H_{,lm}=0.
\end{aligned}
\end{equation}

\noindent Choosing $k=l=1,\ m=2$; $k=l=m=2$ and $k=l=3,\ m=2$ in \eqref{3.1-38}, respectively,
we obtain
\begin{equation*}
\begin{cases}
\begin{aligned}
&\frac{4}{3}\lim_{t\rightarrow\infty}\nabla_{2}f_{3}(p_{t})\bar H_{,11}+\frac{4}{3}\bar H_{,2}\lim_{t\rightarrow\infty}\nabla_{1}\nabla_{1}f_{3}(p_{t})-2S\bar H_{,2}\bar H_{,11}=0,\\
&3\biggl(\frac{4}{3}\lim_{t\rightarrow\infty}\nabla_{2}f_{3}(p_{t})\bar H_{,22}+\frac{4}{3}\bar H_{,2}\lim_{t\rightarrow\infty}\nabla_{2}\nabla_{2}f_{3}(p_{t})-2S\bar H_{,2}\bar H_{,22}\biggl)=0,\\
&\frac{4}{3}\lim_{t\rightarrow\infty}\nabla_{2}f_{3}(p_{t})\bar H_{,33}+\frac{4}{3}\bar H_{,2}\lim_{t\rightarrow\infty}\nabla_{3}\nabla_{3}f_{3}(p_{t})-2S\bar H_{,2}\bar H_{,33}=0,
\end{aligned}
\end{cases}
\end{equation*}
where $\bar H_{,1}=\bar H_{,3}=0$ and $\lim_{t\rightarrow\infty}\nabla_{1}f_{3}(p_{t})=\lim_{t\rightarrow\infty}\nabla_{3}f_{3}(p_{t})=0$.

\noindent Then, it follow that
\begin{equation}\label{3.1-39}
\begin{cases}
\begin{aligned}
&\bar h_{112}(4\lim_{t\rightarrow\infty}\nabla_{1}\nabla_{1}f_{3}(p_{t})-3S\bar H_{,11})=0,\\
&\bar h_{112}(4\lim_{t\rightarrow\infty}\nabla_{2}\nabla_{2}f_{3}(p_{t})-3S\bar H_{,22})=0,\\
&\bar h_{112}(4\lim_{t\rightarrow\infty}\nabla_{3}\nabla_{3}f_{3}(p_{t})-3S\bar H_{,33})=0.
\end{aligned}
\end{cases}
\end{equation}
When $\bar h_{112}=0$, this case is the same as that case of $2\bar h_{112}+\bar h_{233}=0$.

\noindent When $\bar h_{112} \neq 0$, from \eqref{3.1-39}, we have
\begin{equation}\label{3.1-40}
\begin{cases}
\begin{aligned}
&\frac{1}{3}\lim_{t\rightarrow\infty}\nabla_{1}\nabla_{1}f_{3}(p_{t})=\frac{1}{4}S\bar H_{,11},\\
&\frac{1}{3}\lim_{t\rightarrow\infty}\nabla_{2}\nabla_{2}f_{3}(p_{t})=\frac{1}{4}S\bar H_{,22},\\
&\frac{1}{3}\lim_{t\rightarrow\infty}\nabla_{3}\nabla_{3}f_{3}(p_{t})=\frac{1}{4}S\bar H_{,33}.
\end{aligned}
\end{cases}
\end{equation}

\noindent From \eqref{3.1-35}, \eqref{3.1-36} and \eqref{3.1-40}, we know

\begin{equation}\label{3.1-41}
\begin{cases}
\begin{aligned}
&\bar h_{1111}+\bar h_{2211}=-\frac{2(\bar h^{2}_{112}-\bar h^{2}_{123})}{\bar \lambda_{1}}+\frac{1}{2}\bar \lambda_{1},\\
&\bar h_{1122}+\bar h_{2222}=-\frac{2(\bar h^{2}_{112}+\bar h^{2}_{123})}{\bar \lambda_{1}}-\frac{1}{2}\bar \lambda_{1},\\
&\bar h_{1133}+\bar h_{2233}=\frac{4\bar h^{2}_{112}}{\bar \lambda_{1}}.
\end{aligned}
\end{cases}
\end{equation}

\noindent From \eqref{3.1-35}, \eqref{3.1-37} and \eqref{3.1-41}, we have

\begin{equation}\label{3.1-42}
\begin{cases}
\begin{aligned}
&\bar h_{3311}=\bar H_{,11}-(\bar h_{1111}+\bar h_{2211})=-\frac{2(\bar h^{2}_{112}+\bar h^{2}_{123})}{\bar \lambda_{1}}+\frac{1}{2}\bar \lambda_{1}, \\
&\bar h_{3322}=\bar H_{,22}-(\bar h_{1122}+\bar h_{2222})=-\frac{2(\bar h^{2}_{112}-\bar h^{2}_{123})}{\bar \lambda_{1}}-\frac{1}{2}\bar \lambda_{1}, \\
&\bar h_{1133}=-\frac{2\bar h^{2}_{112}+\bar h^{2}_{123}}{\bar \lambda_{1}}, \ \ \bar h_{2233}=\frac{6\bar h^{2}_{112}+\bar h^{2}_{123}}{\bar \lambda_{1}}.
\end{aligned}
\end{cases}
\end{equation}

\noindent By \eqref{3.1-42}, $\bar h_{1133}=\bar h_{3311}$ and $\bar h_{2233}=\bar h_{3322}$, we have
\begin{equation*}
\bar h^{2}_{123}=\frac{1}{2}\bar \lambda^{2}_{1}, \ \ \bar h^{2}_{112}=0.
\end{equation*}
This contradicts the hypothesis.

\noindent {\bf Subcase 1.2: $2\bar h_{111}+\bar h_{133}\neq0$.}

\noindent
From \eqref{3.1-12} and \eqref{3.1-15}, we know that
\begin{equation}\label{3.1-43}
\begin{cases}
\begin{aligned}
&\bar h_{113}=\bar h_{223}=\bar h_{333}=0, \ \ \lim_{t\rightarrow\infty}\nabla_{3}f_{3}(p_{t})=3S\bar h_{113}=0,\\
&2\bar h_{111}-\bar h_{133}=0, \ \ \bar H_{,1}=4\bar h_{111}, \\
&\lim_{t\rightarrow\infty}\langle T, e_{1}\rangle(p_{t})=\frac{4\bar h_{111}}{\bar \lambda_{1}}, \ \ \lim_{t\rightarrow\infty}\nabla_{1}f_{3}(p_{t})=3S\bar h_{111}.
\end{aligned}
\end{cases}
\end{equation}

\noindent If $\bar h_{111}=0$, we have that $\bar h_{133}=2\bar h_{111}=0$ which is impossible since $2\bar h_{111}+\bar h_{133}\neq 0$.
Hence, $\bar h_{111} \neq 0$.

\noindent
From $\bar h_{111} \neq 0$, \eqref{3.1-12} and \eqref{3.1-15}, we have
\begin{equation}\label{3.1-44}
\begin{cases}
\begin{aligned}
&2\bar h_{112}-\bar h_{233}=0, \ \ \bar H_{,2}=4\bar h_{112}, \\
&\lim_{t\rightarrow\infty}\langle T, e_{2}\rangle(p_{t})=-\frac{4\bar h_{112}}{\bar \lambda_{1}}, \ \ \lim_{t\rightarrow\infty}\nabla_{2}f_{3}(p_{t})=3S\bar h_{112}.
\end{aligned}
\end{cases}
\end{equation}

\noindent From \eqref{3.1-3}, \eqref{3.1-43} and \eqref{3.1-44}, we know
\begin{equation}\label{3.1-45}
\begin{cases}
\begin{aligned}
&\bar H_{,11}=\bar h_{1111}+\bar h_{2211}+\bar h_{3311}=\frac{4(\bar h^{2}_{111}-\bar h^{2}_{112})}{\bar \lambda_{1}}+\bar \lambda_{1},\\
&\bar H_{,22}=\bar h_{1122}+\bar h_{2222}+\bar h_{3322}=\frac{4(\bar h^{2}_{111}-\bar h^{2}_{112})}{\bar \lambda_{1}}-\bar \lambda_{1},\\
&\bar H_{,33}=\bar h_{1133}+\bar h_{2233}+\bar h_{3333}=\frac{8(\bar h^{2}_{111}-\bar h^{2}_{112})}{\bar \lambda_{1}},\\
&\bar H_{,12}=\bar h_{1112}+\bar h_{2212}+\bar h_{3312}=\bar h_{123}\lim_{t\rightarrow\infty}\langle X, e_{3}\rangle(p_{t}),\\
&\bar H_{,13}=\bar h_{1113}+\bar h_{2213}+\bar h_{3313}=-\frac{4\bar h_{112}\bar h_{123}}{\bar \lambda_{1}}+2\bar h_{111}\lim_{t\rightarrow\infty}\langle X, e_{3}\rangle(p_{t}),\\
&\bar H_{,23}=\bar h_{1123}+\bar h_{2223}+\bar h_{3323}= \frac{4\bar h_{111}\bar h_{123}}{\bar \lambda_{1}}+2\bar h_{112}\lim_{t\rightarrow\infty}\langle X, e_{3}\rangle(p_{t}).
\end{aligned}
\end{cases}
\end{equation}

\noindent From \eqref{3.1-8}, \eqref{3.1-13}, \eqref{3.1-43} and \eqref{3.1-44} we have
\begin{equation}\label{3.1-46}
\begin{cases}
\begin{aligned}
&\frac{1}{3}\lim_{t\rightarrow\infty}\nabla_{1}\nabla_{1}f_{3}(p_{t})=\bar \lambda^{2}_{1}(\bar h_{1111}+\bar h_{2211})-2\bar \lambda_{1}\bar h^{2}_{123},\\
&\frac{1}{3}\lim_{t\rightarrow\infty}\nabla_{2}\nabla_{2}f_{3}(p_{t})=\bar \lambda^{2}_{1}(\bar h_{1122}+\bar h_{2222})+2\bar \lambda_{1}\bar h^{2}_{123},\\
&\frac{1}{3}\lim_{t\rightarrow\infty}\nabla_{3}\nabla_{3}f_{3}(p_{t})=\bar \lambda^{2}_{1}(\bar h_{1133}+\bar h_{2233})+8\bar \lambda_{1}(\bar h^{2}_{111}-\bar h^{2}_{112}), \\
&\frac{1}{3}\lim_{t\rightarrow\infty}\nabla_{2}\nabla_{1}f_{3}(p_{t})=\bar \lambda^{2}_{1}(\bar h_{1112}+\bar h_{2212}),\\
&\frac{1}{3}\lim_{t\rightarrow\infty}\nabla_{3}\nabla_{1}f_{3}(p_{t})=\bar \lambda^{2}_{1}(\bar h_{1113}+\bar h_{2213})-4\bar \lambda_{1}\bar h_{112}\bar h_{123},\\
&\frac{1}{3}\lim_{t\rightarrow\infty}\nabla_{3}\nabla_{2}f_{3}(p_{t})=\bar \lambda^{2}_{1}(\bar h_{1123}+\bar h_{2223})+4\bar \lambda_{1}\bar h_{111}\bar h_{123},
\end{aligned}
\end{cases}
\end{equation}
and
\begin{equation}\label{3.1-47}
\begin{cases}
\begin{aligned}
&\bar \lambda_{1}(\bar h_{1111}-\bar h_{2211})=-6\bar h^{2}_{111}-2\bar h^{2}_{112}-2\bar h^{2}_{123},\\
&\bar \lambda_{1}(\bar h_{1122}-\bar h_{2222})=-2\bar h^{2}_{111}-6\bar h^{2}_{112}-2\bar h^{2}_{123},\\
&\bar \lambda_{1}(\bar h_{1133}-\bar h_{2233})=-8\bar h^{2}_{111}-8\bar h^{2}_{112}-2\bar h^{2}_{123},\\
&\bar \lambda_{1}(\bar h_{1112}-\bar h_{2212})=-8\bar h_{111}\bar h_{112},\\
&\bar \lambda_{1}(\bar h_{1113}-\bar h_{2213})=-6\bar h_{112}\bar h_{123},\\
&\bar \lambda_{1}(\bar h_{1123}-\bar h_{2223})=-6\bar h_{111}\bar h_{123}.
\end{aligned}
\end{cases}
\end{equation}

\noindent Choosing $k=l=m=1$; $k=l=2,\ m=1$; $k=l=3,\ m=1$; $k=l=1,\ m=2$; $k=l=1,\ m=3$ and $k=2,\ l=3,\ m=1$ in \eqref{3.1-38}, respectively,
we obtain
\begin{equation*}
\begin{cases}
\begin{aligned}
&3\bigg(\frac{4}{3}\lim_{t\rightarrow\infty}\nabla_{1}f_{3}(p_{t})\bar H_{,11}+\frac{4}{3}\bar H_{,1}\lim_{t\rightarrow\infty}\nabla_{1}\nabla_{1}f_{3}(p_{t})-2S\bar H_{,1}\bar H_{,11}\bigg)=0,\\

&\frac{4}{3}\lim_{t\rightarrow\infty}\nabla_{1}f_{3}(p_{t})\bar H_{,22}+\frac{4}{3}\bar H_{,1}\lim_{t\rightarrow\infty}\nabla_{2}\nabla_{2}f_{3}(p_{t})-2S\bar H_{,1}\bar H_{,22} \\
&+2\bigg(\frac{4}{3}\lim_{t\rightarrow\infty}\nabla_{2}f_{3}(p_{t})\bar H_{,12}+\frac{4}{3}\bar H_{,2}\lim_{t\rightarrow\infty}\nabla_{2}\nabla_{1}f_{3}(p_{t})-2S\bar H_{,2}\bar H_{,12}\bigg)=0,\\

&\frac{4}{3}\lim_{t\rightarrow\infty}\nabla_{1}f_{3}(p_{t})\bar H_{,33}+\frac{4}{3}\bar H_{,1}\lim_{t\rightarrow\infty}\nabla_{3}\nabla_{3}f_{3}(p_{t})-2S\bar H_{,1}\bar H_{,33}=0,\\

&\frac{4}{3}\lim_{t\rightarrow\infty}\nabla_{2}f_{3}(p_{t})\bar H_{,11}+\frac{4}{3}\bar H_{,2}\lim_{t\rightarrow\infty}\nabla_{1}\nabla_{1}f_{3}(p_{t})-2S\bar H_{,2}\bar H_{,11} \\
&+2\bigg(\frac{4}{3}\lim_{t\rightarrow\infty}\nabla_{1}f_{3}(p_{t})\bar H_{,12}+\frac{4}{3}\bar H_{,1}\lim_{t\rightarrow\infty}\nabla_{2}\nabla_{1}f_{3}(p_{t})-2S\bar H_{,1}\bar H_{,12}\bigg)=0,\\

&2\bigg(\frac{4}{3}\lim_{t\rightarrow\infty}\nabla_{1}f_{3}(p_{t})\bar H_{,13}+\frac{4}{3}\bar H_{,1}\lim_{t\rightarrow\infty}\nabla_{3}\nabla_{1}f_{3}(p_{t})-2S\bar H_{,1}\bar H_{,13}\bigg)=0,\\

&\bigg(\frac{4}{3}\lim_{t\rightarrow\infty}\nabla_{1}f_{3}(p_{t})\bar H_{,23}+\frac{4}{3}\bar H_{,1}\lim_{t\rightarrow\infty}\nabla_{3}\nabla_{2}f_{3}(p_{t})-2S\bar H_{,1}\bar H_{,23}\bigg)\\
&+\bigg(\frac{4}{3}\lim_{t\rightarrow\infty}\nabla_{2}f_{3}(p_{t})\bar H_{,13}+\frac{4}{3}\bar H_{,2}\lim_{t\rightarrow\infty}\nabla_{3}\nabla_{1}f_{3}(p_{t})-2S\bar H_{,2}\bar H_{,13}\bigg)=0.
\end{aligned}
\end{cases}
\end{equation*}
where $\bar H_{,3}=0$ and $\lim_{t\rightarrow\infty}\nabla_{3}f_{3}(p_{t})=0$.

\noindent And then, from \eqref{3.1-43}, we have
\begin{equation*}
\begin{cases}
\begin{aligned}
&\bar h_{111}(4\lim_{t\rightarrow\infty}\nabla_{1}\nabla_{1}f_{3}(p_{t})-3S\bar H_{,11})=0,\\
&\bar h_{111}(4\lim_{t\rightarrow\infty}\nabla_{2}\nabla_{2}f_{3}(p_{t})-3S\bar H_{,22})+2\bar h_{112}(4\lim_{t\rightarrow\infty}\nabla_{2}\nabla_{1}f_{3}(p_{t})-3S\bar H_{,12})=0,\\
&\bar h_{111}(4\lim_{t\rightarrow\infty}\nabla_{3}\nabla_{3}f_{3}(p_{t})-3S\bar H_{,33})=0,\\
&\bar h_{112}(4\lim_{t\rightarrow\infty}\nabla_{1}\nabla_{1}f_{3}(p_{t})-3S\bar H_{,11})+2\bar h_{111}(4\lim_{t\rightarrow\infty}\nabla_{2}\nabla_{1}f_{3}(p_{t})-3S\bar H_{,12})=0,\\
&\bar h_{111}(4\lim_{t\rightarrow\infty}\nabla_{3}\nabla_{1}f_{3}(p_{t})-3S\bar H_{,13})=0,\\
&\bar h_{111}(4\lim_{t\rightarrow\infty}\nabla_{3}\nabla_{2}f_{3}(p_{t})-3S\bar H_{,23})+\bar h_{112}(4\lim_{t\rightarrow\infty}\nabla_{3}\nabla_{1}f_{3}(p_{t})-3S\bar H_{,13})=0.
\end{aligned}
\end{cases}
\end{equation*}
Therefore,
\begin{equation}\label{3.1-48}
\begin{aligned}
&\frac{1}{3}\lim_{t\rightarrow\infty}\nabla_{1}\nabla_{1}f_{3}(p_{t})=\frac{1}{4}S\bar H_{,11},\ \ \frac{1}{3}\lim_{t\rightarrow\infty}\nabla_{2}\nabla_{2}f_{3}(p_{t})=\frac{1}{4}S\bar H_{,22},\\
&\frac{1}{3}\lim_{t\rightarrow\infty}\nabla_{3}\nabla_{3}f_{3}(p_{t})=\frac{1}{4}S\bar H_{,33},\ \ \frac{1}{3}\lim_{t\rightarrow\infty}\nabla_{2}\nabla_{1}f_{3}(p_{t})=\frac{1}{4}S\bar H_{,12},\\
 &\frac{1}{3}\lim_{t\rightarrow\infty}\nabla_{3}\nabla_{1}f_{3}(p_{t})=\frac{1}{4}S\bar H_{,13},\ \ \frac{1}{3}\lim_{t\rightarrow\infty}\nabla_{3}\nabla_{2}f_{3}(p_{t})=\frac{1}{4}S\bar H_{,23}.
\end{aligned}
\end{equation}

\noindent From \eqref{3.1-45}, \eqref{3.1-46} and \eqref{3.1-48}, we know

\begin{equation}\label{3.1-49}
\begin{cases}
\begin{aligned}
&\bar h_{1111}+\bar h_{2211}=\frac{2(\bar h^{2}_{111}-\bar h^{2}_{112}+\bar h^{2}_{123})}{\bar \lambda_{1}}+\frac{1}{2}\bar \lambda_{1},\\
 &\bar h_{1122}+\bar h_{2222}=\frac{2(\bar h^{2}_{111}-\bar h^{2}_{112}-\bar h^{2}_{123})}{\bar \lambda_{1}}-\frac{1}{2}\bar \lambda_{1},\\
&\bar h_{1133}+\bar h_{2233}=-\frac{4(\bar h^{2}_{111}-\bar h^{2}_{112})}{\bar \lambda_{1}}, \
 \bar h_{1112}+\bar h_{2212}=\frac{1}{2}\bar h_{123}\lim_{t\rightarrow\infty}\langle X, e_{3}\rangle(p_{t}),\\
&\bar h_{1113}+\bar h_{2213}=\frac{2\bar h_{112}\bar h_{123}}{\bar \lambda_{1}}+\bar h_{111}\lim_{t\rightarrow\infty}\langle X, e_{3}\rangle(p_{t}),\\
&\bar h_{1123}+\bar h_{2223}=-\frac{2\bar h_{111}\bar h_{123}}{\bar \lambda_{1}}+\bar h_{112}\lim_{t\rightarrow\infty}\langle X, e_{3}\rangle(p_{t}).
\end{aligned}
\end{cases}
\end{equation}

\noindent From \eqref{3.1-45}, \eqref{3.1-47} and \eqref{3.1-49}, we have
\begin{equation}\label{3.1-50}
\begin{cases}
\begin{aligned}
&\bar h_{1111}=-\frac{2(\bar h^{2}_{111}+\bar h^{2}_{112})}{\bar \lambda_{1}}+\frac{1}{4}\bar \lambda_{1}, \ \
 \bar h_{2211}=\frac{2(2\bar h^{2}_{111}+\bar h^{2}_{123})}{\bar \lambda_{1}}+\frac{1}{4}\bar \lambda_{1}, \\

&\bar h_{3311}=\frac{2(\bar h^{2}_{111}-\bar h^{2}_{112}-\bar h^{2}_{123})}{\bar \lambda_{1}}+\frac{1}{2}\bar \lambda_{1}, \ \
\bar h_{1122}=-\frac{2(2\bar h^{2}_{112}+\bar h^{2}_{123})}{\bar \lambda_{1}}-\frac{1}{4}\bar \lambda_{1}, \\

&\bar h_{2222}=\frac{2(\bar h^{2}_{111}+\bar h^{2}_{112})}{\bar \lambda_{1}}-\frac{1}{4}\bar \lambda_{1}, \ \
\bar h_ {3322}=\frac{2(\bar h^{2}_{111}-\bar h^{2}_{112}+\bar h^{2}_{123})}{\bar \lambda_{1}}-\frac{1}{2}\bar \lambda_{1}, \\

&\bar h_{1133}=-\frac{6\bar h^{2}_{111}+2\bar h^{2}_{112}+\bar h^{2}_{123}}{\bar \lambda_{1}}, \ \
\bar h_{2233}=\frac{2\bar h^{2}_{111}+6\bar h^{2}_{112}+\bar h^{2}_{123}}{\bar \lambda_{1}}, \\

&\bar h_{1112}=-\frac{4\bar h_{111}\bar h_{112}}{\bar \lambda_{1}}+\frac{1}{4}\bar h_{123}\lim_{t\rightarrow\infty}\langle X, e_{3}\rangle(p_{t}), \ \
\bar h_{2212}=\frac{4\bar h_{111}\bar h_{112}}{\bar \lambda_{1}}+\frac{1}{4}\bar h_{123}\lim_{t\rightarrow\infty}\langle X, e_{3}\rangle(p_{t}), \\

&\bar h_{3312}=\frac{1}{2}\bar h_{123}\lim_{t\rightarrow\infty}\langle X, e_{3}\rangle(p_{t}), \ \
\bar h_{1113}=-\frac{2\bar h_{112}\bar h_{123}}{\bar \lambda_{1}}+\frac{1}{2}\bar h_{111}\lim_{t\rightarrow\infty}\langle X, e_{3}\rangle(p_{t}), \\

&\bar h_{2213}=\frac{4\bar h_{112}\bar h_{123}}{\bar \lambda_{1}}+\frac{1}{2}\bar h_{111}\lim_{t\rightarrow\infty}\langle X, e_{3}\rangle(p_{t}), \ \
\bar h_{3313}=-\frac{6\bar h_{112}\bar h_{123}}{\bar \lambda_{1}}+\bar h_{111}\lim_{t\rightarrow\infty}\langle X, e_{3}\rangle(p_{t}),\\

&\bar h_{1123}=-\frac{4\bar h_{111}\bar h_{123}}{\bar \lambda_{1}}+\frac{1}{2}\bar h_{112}\lim_{t\rightarrow\infty}\langle X, e_{3}\rangle(p_{t}), \ \
\bar h_{2223}=\frac{2\bar h_{111}\bar h_{123}}{\bar \lambda_{1}}+\frac{1}{2}\bar h_{112}\lim_{t\rightarrow\infty}\langle X, e_{3}\rangle(p_{t}),\\

&\bar h_{3323}=\frac{6\bar h_{111}\bar h_{123}}{\bar \lambda_{1}}+\bar h_{112}\lim_{t\rightarrow\infty}\langle X, e_{3}\rangle(p_{t}).
\end{aligned}
\end{cases}
\end{equation}

\noindent By \eqref{3.1-6} and \eqref{3.1-50}, we have
\begin{equation*}
\begin{aligned}
&\bar h_{1122}-\bar h_{2211}=-\frac{4(\bar h^{2}_{111}+\bar h^{2}_{112}+\bar h^{2}_{123})}{\bar \lambda_{1}}-\frac{1}{2}\bar \lambda_{1}=-2\bar \lambda^{3}_{1}, \\
&\bar h_{1133}=\bar h_{3311}=\frac{2(\bar h^{2}_{111}-\bar h^{2}_{112}-\bar h^{2}_{123})}{\bar \lambda_{1}}+\frac{1}{2}\bar \lambda_{1}=-\frac{6\bar h^{2}_{111}+2\bar h^{2}_{112}+\bar h^{2}_{123}}{\bar \lambda_{1}}, \\
&\bar h_{2233}=\bar h_{3322}=\frac{2(\bar h^{2}_{111}-\bar h^{2}_{112}+\bar h^{2}_{123})}{\bar \lambda_{1}}-\frac{1}{2}\bar \lambda_{1}=\frac{2\bar h^{2}_{111}+6\bar h^{2}_{112}+\bar h^{2}_{123}}{\bar \lambda_{1}},
\end{aligned}
\end{equation*}
and then,
\begin{equation}\label{3.1-51}
\bar h^{2}_{111}=\bar h^{2}_{112}=\frac{1}{80}S^{2}-\frac{1}{32}S,\ \ \bar h^{2}_{123}=\frac{1}{10}S^{2}.
\end{equation}

\noindent Since
\begin{equation*}
S \geq 1, \ \ \sum_{i,j,k}h_{ijk}h_{ijkl}=0 \ \ \text{for } \ l=1, 2, 3.
\end{equation*}

\noindent By $\bar h_{111}=\bar h_{221}=\frac{1}{2}\bar h_{331}$, $\bar h_{112}=\bar h_{222}=\frac{1}{2}\bar h_{332}$ and \eqref{3.1-6},
we have,
\begin{equation}\label{3.1-52}
\begin{aligned}
&\bar h_{111}\bar h_{1111}+3\bar h_{111}\bar h_{2211}+6\bar h_{111}\bar h_{3311}+3\bar h_{112}\bar h_{1112}+\bar h_{112}\bar h_{2212}+6\bar h_{112}\bar h_{3312}\\
&+6\bar h_{123}\bar h_{1123}=0, \\
&\bar h_{111}\bar h_{1112}+3\bar h_{111}\bar h_{2212}+6\bar h_{111}\bar h_{3312}+3\bar h_{112}\bar h_{1122}+\bar h_{112}\bar h_{2222}+6\bar h_{112}\bar h_{3322}\\
&+6\bar h_{123}\bar h_{2213}=0, \\
&\bar h_{111}\bar h_{1113}+3\bar h_{111}\bar h_{2213}+6\bar h_{111}\bar h_{3313}+3\bar h_{112}\bar h_{1123}+\bar h_{112}\bar h_{2223}+6\bar h_{112}\bar h_{3323}\\
&+6\bar h_{123}\bar h_{3312}=0.
\end{aligned}
\end{equation}

\noindent From \eqref{3.1-51}, we get that either $\bar h_{111}=\bar h_{112}\neq 0$ or $\bar h_{111}=-\bar h_{112}\neq 0$ and $\bar h_{123}\neq 0$.

\noindent If $\bar h_{111}=\bar h_{112}$, from \eqref{3.1-50} and \eqref{3.1-52}, we have
\begin{equation*}
\begin{aligned}
&-\frac{30\bar h^{2}_{123}}{\bar \lambda_{1}}+4\bar \lambda_{1}+7\bar h_{123}\lim_{t\rightarrow\infty}\langle X, e_{3}\rangle(p_{t})=0, \\
& \frac{30\bar h^{2}_{123}}{\bar \lambda_{1}}-4\bar \lambda_{1}+7\bar h_{123}\lim_{t\rightarrow\infty}\langle X, e_{3}\rangle(p_{t})=0.
\end{aligned}
\end{equation*}
Then, $\lim_{t\rightarrow\infty}\langle X, e_{3}\rangle(p_{t})=0$ and $\bar h^{2}_{123}=\frac{1}{15}S$.

\noindent Since $\bar h^{2}_{123}=\frac{1}{10}S^{2}$ and $S\geq 1$, we obtain
$$S = \frac{2}{3}.$$
It is impossible.

\noindent If $\bar h_{111}=-\bar h_{112}$, from \eqref{3.1-50} and \eqref{3.1-52}, we know that
\begin{equation*}
\begin{aligned}
&-\frac{30\bar h^{2}_{123}}{\bar \lambda_{1}}+4\bar \lambda_{1}-7\bar h_{123}\lim_{t\rightarrow\infty}\langle X, e_{3}\rangle(p_{t})=0, \\
&-\frac{30\bar h^{2}_{123}}{\bar \lambda_{1}}+4\bar \lambda_{1}+7\bar h_{123}\lim_{t\rightarrow\infty}\langle X, e_{3}\rangle(p_{t})=0.
\end{aligned}
\end{equation*}
Then, $\lim_{t\rightarrow\infty}\langle X, e_{3}\rangle(p_{t})=0$ and $\bar h^{2}_{123}=\frac{1}{15}S$.
As in the previous case, it is a contradiction.

\noindent {\bf Case 2: $\bar \lambda_{1}\bar \lambda_{2} \bar \lambda_{3}\neq0$}.

\noindent From $S(p)\neq 0$, $\bar H=0$ and $f_{3}=\frac{H}{2}(3S-H^{2})+3\lambda_{1}\lambda_{2}\lambda_{3}$, we have $\lim_{t\rightarrow\infty}f_{3}(p_{t})\neq 0$.

\noindent Since $f_{4}=\frac{4}{3}Hf_{3}-H^{2}S+\frac{1}{6}H^{4}+\frac{1}{2}S^{2}$ and $\lim_{t\rightarrow\infty}f_{3}(p_{t})\neq 0$, we get
\begin{equation*}
\begin{aligned}
0=\nabla_{k}f_{4}
 =&\frac{4}{3}f_{3} H_{,k}+\frac{4}{3}H\nabla_{k}f_{3}-2SHH_{,k}+\frac{2}{3}H^{3}H_{,k}, \\
0=\nabla_{l}\nabla_{k}f_{4}
 =&\frac{4}{3}f_{3}H_{,kl}-2SHH_{,kl}
   +\frac{2}{3}H^{3}H_{,kl}+\frac{4}{3}H\nabla_{l}\nabla_{k}f_{3}+\frac{4}{3}\nabla_{l}f_{3} H_{,k} \\
  &+\frac{4}{3}H_{,l}\nabla_{k}f_{3}-2SH_{,k}H_{,l}+2H^{2}H_{,k}H_{,l},
\ \ \text{for } \ k, l=1, 2, 3.
\end{aligned}
\end{equation*}
Then, $\bar H_{,k}=0$ and $\bar H_{,kl}=0$ for $k, l=1, 2, 3$.
\noindent Especially,
\begin{equation}\label{3.1-53}
\begin{cases}
\begin{aligned}
&\bar H_{,1}=\bar \lambda_{1}\lim_{t\rightarrow\infty}\langle X,e_{1} \rangle(p_{t})=0,\\
&\bar H_{,2}=\bar \lambda_{2}\lim_{t\rightarrow\infty}\langle X,e_{2} \rangle(p_{t})=0,\\
&\bar H_{,3}=\bar \lambda_{3}\lim_{t\rightarrow\infty}\langle X,e_{3} \rangle(p_{t})=0,
\end{aligned}
\end{cases}
\end{equation}
and
\begin{equation}\label{3.1-54}
\begin{cases}
\begin{aligned}
&\bar H_{,11}=\bar h_{1111}+\bar h_{2211}+\bar h_{3311}=\sum_{k}h_{11k}\lim_{t\rightarrow\infty}\langle X,e_{k} \rangle(p_{t})+\bar \lambda_{1}=0,\\
&\bar H_{,22}=\bar h_{1122}+\bar h_{2222}+\bar h_{3322}=\sum_{k}h_{22k}\lim_{t\rightarrow\infty}\langle X,e_{k} \rangle(p_{t})+\bar \lambda_{2}=0,\\
&\bar H_{,33}=\bar h_{1133}+\bar h_{2233}+\bar h_{3333}=\sum_{k}h_{33k}\lim_{t\rightarrow\infty}\langle X,e_{k} \rangle(p_{t})+\bar \lambda_{3}=0.
\end{aligned}
\end{cases}
\end{equation}
From \eqref{3.1-53} and $\bar \lambda_{k} \neq 0$ for $k=1,2,3$, one has
\begin{equation*}
\lim_{t\rightarrow\infty}\langle X,e_{k} \rangle(p_{t})=0, \ \  \text{for} \ k=1, 2, 3.
\end{equation*}
And then, by \eqref{3.1-54}, we know that $\bar \lambda_{k}=0$ for $k=1, 2, 3$.
It is a contradiction.

\end{proof}

\vskip3mm
\noindent
{\it Proof of Theorem \ref{theorem 1}}. If $S=0$, then $X: M^{3}\to \mathbb{R}^{4}$ is $\mathbb{R}^3$. If $S \neq 0$, from Theorem \ref{theorem 2}, we know $\inf H^{2}>0$. By using of the Lemma 2.4, we conclude
$X: M^{3}\to \mathbb{R}^{4}$ is either $S^{1}(1)\times \mathbb{R}^{2}$, $S^{2}(\sqrt{2})\times \mathbb{R}^{1}$ or  $S^{3}(\sqrt{3})$.
\begin{flushright}
$\square$
\end{flushright}


\begin{thebibliography}{99}
\bibitem{AL}
U. Abresch and J. Langer, The normalized curve shortening flow and
homothetic  solutions, J. Differential Geom., {\bf 23}(1986), 175-196.

\bibitem{A}
H. Anciaux, Construction of Lagrangian self-similar solutions to the mean curvature flow in  $\mathbb C^n$,  Geom. Dedic., {\bf 120}(2006), 37-48.

\bibitem{B}
S. Brendle,  Embedded self-similar shrinkers of genus 0,   Ann. of Math.,  {\bf 183}(2016), 715-728.

\bibitem{CL}
H.-D. Cao and H. Li,  A gap theorem for self-shrinkers of the mean
curvature flow in arbitrary codimension, Calc. Var. Partial Differential Equations, {\bf 46} (2013), 879-889.

\bibitem{CHW}
Q. -M. Cheng,  H. Hori and G. Wei,  Complete Lagrangian self-shrinkers in  $\mathbb R^{4}$ , arXiv:1802.02396.

\bibitem{CO}
Q. -M. Cheng and S. Ogata, $2$-dimensional complete  self-shrinkers in $\mathbb R^{3}$,
  Math. Z., {\bf 284}(2016), 537-542.

\bibitem{CP}
Q. -M. Cheng and Y. Peng,  Complete  self-shrinkers of the mean curvature flow,
  Calc. Var. Partial Differential Equations, {\bf 52} (2015), 497-506.

\bibitem{CW}
Q. -M. Cheng and G. Wei,  A gap theorem for self-shrinkers,
Trans. Amer. Math. Soc., {\bf 367} (2015), 4895-4915.

\bibitem{CZ}
X. Cheng and D. Zhou,  Volume estimate about shrinkers, Proc. Amer. Math. Soc., {\bf 141} (2013), 687-696.

\bibitem{CM}
T. H. Colding and W. P.  Minicozzi II,  Generic mean curvature flow I;  Generic singularities,
Ann. of Math.,  {\bf 175} (2012), 755-833.

\bibitem{DX1}
Q. Ding and Y. L. Xin, Volume growth, eigenvalue and compactness for self-shrinkers,  Asian J. Math., {\bf 17} (2013), 443-456.

\bibitem{DX2}
Q. Ding and Y. L. Xin, The rigidity theorems of self shrinkers, Trans. Amer. Math. Soc., {\bf 366} (2014), 5067-5085.

\bibitem{H}
H. Halldorsson, Self-similar solutions to the curve shortening flow, Trans. Amer. Math. Soc., {\bf 364} (2012), 5285-5309.

\bibitem{H1}
G. Huisken,
Flow by mean curvature convex surfaces into spheres, J. Differential
Geom.,  {\bf 20} (1984), 237-266.

\bibitem{H2} G. Huisken,
Asymptotic behavior for singularities of the mean curvature flow, J. Differential
Geom., {\bf 31} (1990), 285-299.

\bibitem{H3}
G. Huisken, Local and global behaviour of hypersurfaces moving by mean curvature,  Differential
geometry: partial differential equations on manifolds (Los Angeles, CA, 1990), Proc. Sympos. Pure
Math., {\bf 54}, Part 1, Amer. Math. Soc., Providence, RI, (1993), 175-191.

\bibitem{L}
H. B. Lawson,  Local rigidity theorems for minimal hypersurfaces,
Ann. of Math.,  {\bf 89} (1969), 187-197.


\bibitem{[LS]}
Nam Q. Le and N. Sesum, Blow-up rate of the mean curvature during the mean curvature
flow and a gap theorem for self-shrinkers, Comm. Anal. Geom., {\bf 19} (2011), 1-27.

\bibitem{[LXX]}
L. Lei, H. W. Xu and Z. Y. Xu,  A new pinching theorem for complete self-shrinkers and its generalization, arXiv:1712.01899v1.

\bibitem{LW}
H. Li and X. F.  Wang, New characterizations of the Clifford torus as a Lagrangian  self-shrinkers,
J. Geom. Anal., {\bf 27} (2017), 1393-1412.

\bibitem{LW1}
H. Li and Y.  Wei, {\it Lower volume growth estimates for self-shrinkers of mean curvature flow}, Proc. Amer. Math. Soc.,  {\bf 142} (2014), 3237-3248.

\bibitem{LW2}
H. Li and Y. Wei, {\it Classification and rigidity of self-shrinkers in the mean curvature flow}, J. Math. Soc. Japan, {\bf 66} (2014), 709-734.

\bibitem{N1}
A. Neves, Singularities of Lagrangian mean curvature flow: monotone case,  Math. Res. Lett., {\bf 17}(2010), 109-126.

\bibitem{Y}
S. T. Yau, Submanifolds with Constant Mean Curvature I ,  Amer.  J.   Math., {\bf 96} (1974), 346-366.

\end{thebibliography}
\end{document}